\providecommand{\noopsort}[1]{}
\title[Positive curvature under symmetry]{Topological properties of positively curved manifolds with symmetry}
\def\titl{Topological properties of positively curved manifolds with symmetry}
\def\auth{Manuel Amann and Lee Kennard}
\date{June 3rd, 2012}
\subjclass[2010]{53C20 (Primary), 57N65 (Secondary)}
\keywords{\noindent positive curvature, symmetry, Euler characteristic, Betti numbers, Hopf conjecture, elliptic genus, symmetric spaces}
\thanks{The first author was supported by a grant of the German Research Foundation. The second author was partially supported by National Science Foundation Grant DMS-1045292.}
\author{\auth}
\newtheorem{theo}{Theorem}[section]
\newtheorem{main}{Theorem}
\newtheorem{maincor}[main]{Corollary}
\newtheorem*{main*}{Theorem}
\newtheorem*{mainprop*}{Proposition}
\newtheorem{mainconj}{Conjecture}
\newtheorem{prop}[theo]{Proposition}
\newtheorem{defi2}[theo]{Definition}
\newtheorem*{defi2*}{Definition}
\newenvironment{defi}{\begin{defi2}\normalfont}{\end{defi2}}
\newenvironment{defi*}{\begin{defi2*}\normalfont}{\end{defi2*}}
\newenvironment{defin*}[1]{\begin{defi2*}[#1]\normalfont}{\end{defi2*}}
\newtheorem{rem2}[theo]{Remark}
\newtheorem*{rem2*}{Remark}
\newenvironment{rem}{\begin{rem2}\normalfont}{\hfill$\boxbox$\end{rem2}}
\newenvironment{rem*}{\begin{rem2*}\normalfont}{\hfill$\boxbox$\end{rem2*}}
\newtheorem{lemma}[theo]{Lemma}
\newtheorem*{cor*}{Corollary}
\newtheorem*{conj*}{Conjecture}
\newtheorem*{theo*}{Theorem}
\newtheorem*{ques*}{Question}
\newtheorem*{mi2}{Main Idea}
\newtheorem{ex2}[theo]{Example}
\newenvironment{ex}{\begin{ex2}\normalfont}{\hfill$\boxbox$\end{ex2}}
\newtheorem{exer2}[theo]{Exercise}
\newtheorem{alg2}[theo]{Algorithm}
\newcommand{\cc}{{\mathbb{C}}}                                     
\newcommand{\hh}{{\mathbb{H}}}                                     
\newcommand{\pp}{{\mathbf{P}}}                                     
\newcommand{\s}{{\mathbb{S}}}                                      
\newcommand{\SO}{{\mathbf{SO}}}                                    
\newcommand{\U}{{\mathbf{U}}}                                      
\newcommand{\id}{{\operatorname{id}}}                              
\newcommand{\W}{{\operatorname{W}}}                                
\newcommand{\comment}[1]{}                                         
\newcommand{\ack}{\noindent\textbf{Acknowledgements. }}            
\newcommand{\thm}[1]{Theorem \ref{#1}}
\newcommand{\PROP}[1]{Proposition \ref{#1}}
\newcommand{\Z}{\mathbb{Z}}
\newcommand{\Q}{\mathbb{Q}}
\newcommand{\embedded}{\hookrightarrow}
\DeclareMathOperator{\cod}{cod}
\newcommand{\ceil}[1]{\left\lceil #1 \right\rceil}
\newcommand{\floor}[1]{\left\lfloor #1 \right\rfloor}
\newcommand{\pfrac}[2]{\left(\frac{#1}{#2}\right)}
\newcommand{\of}[1]{\left(#1\right)}
\newcommand{\st}{~|~}
\DeclareMathOperator{\beven}{b_{\mathrm{even}}}
\begin{document}


\begin{abstract}
Manifolds admitting positive sectional curvature are conjectured to have rigid homotopical structure and, in particular, comparatively small Euler charateristics.

In this article, we obtain upper bounds for the Euler characteristic of a positively curved Riemannian manifold that admits a large isometric torus action. We apply our results to prove obstructions to symmetric spaces, products of manifolds, and connected sums admitting positively curved metrics with symmetry.
\end{abstract}

\maketitle \thispagestyle{empty}

\section*{Introduction}
\bigskip

The question of whether a given smooth manifold admits a Riemannian metric with positive sectional curvature is nearly as old as the subject of Riemannian geometry itself. However, there are not many known examples of manifolds admitting positive sectional curvature. In addition to spheres and projectives spaces, only a short list of families arise (see Ziller \cite{Ziller07} for a survey of examples, and see Dearricott \cite{Dearricott11}, Grove--Verdiani--Ziller \cite{GroveVerdianiZiller11}, and Petersen--Wilhelm \cite{PetersenWilhelm09pre} for two new examples in dimension seven).

The complementary task of proving obstructions to positive curvature has proven to be just as difficult. In fact, for simply connected closed manifolds, every known topological obstruction to positive sectional curvature is already an obstruction to non-negative sectional curvature.

In the 1990s, Karsten Grove initiated a research program that has been breathing new life into these problems for two decades. The idea is to focus on positively curved metrics that admit large isometry groups. The measure of symmetry we will consider in this paper is the \textit{symmetry rank}, which is the rank of the isometry group. We recall that the symmetry rank of a Riemannian manifold is at least $r$ if and only if there exists an effective, isometric action a torus of dimension $r$.

Many topological classification results of varying strengths (diffeomorphism, homeomorphism, homotopy, etc.) have been proven under the assumption that the symmetry rank is sufficiently large. The classification theorems of Grove--Searle (Theorem \ref{thm:GroveSearle}) and Wilking (Theorem \ref{thm:Wilking}) are prototypical examples. For related results and context, we refer the reader to the comprehensive surveys of Grove \cite{Grove09} and Wilking \cite{Wilking07}.

Our first result concerns the Euler characteristic of positively curved manifolds with symmetry. The topological classifications of Grove--Searle and Wilking imply complete calculations of the Euler characteristic under linear symmetry rank assumptions. In \cite{Kennard1}, the second author proved that $\chi(M) > 0$ for all positively curved $(4k)$--dimensional manifolds $M$ with symmetry rank at least $2\log_2(4k)-2$. Under a slightly larger logarithmic symmetry assumption, we prove the following upper bound for $\chi(M)$. (See Theorem \ref{LogAssumptionPLUS} for a stronger, but more involved, statement.)

\begin{main}\label{LogAssumption}
If $M^n$ is a closed Riemannian manifold with positive sectional curvature and symmetry rank at least $\log_{4/3}(n)$, then
	$\chi(M) < 2^{3(\log_2 n)^2}$.
\end{main}

We would like to highlight connections to some conjectured upper bounds for the Betti numbers of non-negatively curved manifolds. Since $\chi(M) \leq \sum b_i(M)$, such bounds immediately imply bounds for the Euler characteristic.

Gromov's Betti number estimate provides one such bound. Specifically, Gromov proved that there exists a constant $C(n)$ such that $\sum b_i(M) \leq C(n)$ for all $n$--manifolds $M$ that admit a non-negatively curved metric (see \cite{Gromov81}). It is conjectured that $C(n)$ can be replaced by $2^n$, the sum of the Betti numbers of the $n$--dimensional torus.

The second conjectured upper bound follows from a combination of the Bott--Grove--Halperin conjecture and the Hopf conjecture. Bott--Grove--Halperin conjectured that manifolds admitting non-negative curvature are rationally elliptic (see Grove \cite[Section 5]{Grove02}), and Hopf conjectured that even-dimensional manifolds admitting positive curvature have positive Euler characteristic. Given these conjectures, it follows that an oriented, closed, even-dimensional manifold $M^n$ that admits positive curvature has Euler characteristic at most $2^{n/2}$. Since the estimate in Theorem \ref{LogAssumption} is asymptotically smaller than both $2^n$ and $2^{n/2}$, even a verification of these conjectures
would not imply our result.

\smallskip

Our second result provides a stronger bound on $\chi(M)$ by assuming an asymptotically larger symmetry assumption than in Theorem \ref{LogAssumption}. Specifically, the symmetry assumption is linear in the dimension, however the slope is allowed to be arbitrarily small.

\begin{main}\label{LinearAssumptionDelta}
Let $\delta>0$. There exists a constant $c = c(\delta)$ such that the following holds: If a closed Riemannian manifold $M^n$ has positive sectional curvature and an effective, isometric action by a torus $T$ with $\dim(T) \geq \delta n$, then the fixed-point set $M^T$ has at most $cn$ components and $\chi(M) < c n \log_2 n$.

\end{main}

See Theorem \ref{LinearAssumptionAlpha} for a more elaborate statement containing the concretely stated constant $c(\delta)$.
To be specific in just one case, we mention the following consequence of Theorem \ref{LinearAssumptionAlpha}: If $M^{2n}$ is a closed, positively curved Riemannian manifold, and if $M$ admits an effective, isometric action by $T^r$ with
\begin{align*}	
r \geq \frac{(2n)}{8} + 2\log_2(2n) + 1,
\end{align*}
then the fixed-point set of the torus action has fewer than $\frac{3}{2}\chi(\cc\pp^n)$ components and
\begin{align*}
\chi(M) < \chi(\cc\pp^n)(1 + \log_2 n).
\end{align*}

\smallskip

Let us state two corollaries of Theorem \ref{LogAssumption}. They relate to Hopf's conjecture that no positively curved metric exists on $\s^2\times\s^2$. More generally, it is conjectured that no positively curved metric exists on a non-trivial product of simply connected closed manifolds or on a compact symmetric space of rank greater than one.
\begin{maincor}[Stable Hopf conjecture with symmetry]\label{cor:StableHopf}
Let $M^n$ be a closed, one-connected Riemannian manifold with even dimension, positive sectional curvature, and symmetry rank at least $\log_{4/3} n$.
\begin{itemize}
\item If $M=N^{\times k} = N \times N \times \cdots \times N$, then
$k < 3(\log_2 n)^2$.
\item If $M=N^{\# k} = N \# N \# \cdots \# N$ with $\chi(N) \neq 2$, then $k < 2^{3(\log_2 n)^2}$.
\end{itemize}
\end{maincor}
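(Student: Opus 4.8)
The plan is to derive the corollary purely formally from Theorem~\ref{LogAssumption} together with two facts extracted from its proof. Let $T$ be a maximal torus in $\Isom(M)$; by hypothesis $\dim T\ge\log_{4/3}n$, so Theorem~\ref{LogAssumption} applies. For $n\ge 4$ the Remark following it gives
\[
\chi(M)\ \le\ \sum b_{2i}(M^T)\ \le\ \of{\frac{n}{2}+1}^{1+\log_{4/3}\of{\frac{n}{2}+1}}\ <\ 2^{3(\log_2 n)^2},
\]
while for $n=2$ the hypotheses are vacuous (the isometry group of $\s^2$ has rank $1<\log_{4/3}2$), so we assume $n\ge 4$ throughout. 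Moreover the proof of Theorem~\ref{LogAssumption} analyses the fixed--point set $M^T$ and in particular shows $\chi(M)=\chi(M^T)\ge 2$: the fixed set is nonempty, carries no odd rational cohomology, and is not a single point. With these two inputs the rest is bookkeeping with the behaviour of the Euler characteristic under products and connected sums.

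\textbf{Products.} Since $M=N^{\times k}$ is one--connected, so is $N$, and the K\"unneth formula gives $\chi(M)=\chi(N)^k$. From $\chi(M)\ge 2$ we get $|\chi(N)|\ge 2$: if $\chi(N)=0$ then $\chi(M)=0$, and if $|\chi(N)|=1$ then $\chi(M)=1$, both impossible. If $\chi(N)\le -2$ then $k$ must be even, for otherwise $\chi(M)=\chi(N)^k<0$; hence in all cases $\chi(M)=|\chi(N)|^k\ge 2^k$. Combining with $\chi(M)<2^{3(\log_2 n)^2}$ yields $k<3(\log_2 n)^2$.

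\textbf{Connected sums.} Since $n$ is even, $\chi(A\#B)=\chi(A)+\chi(B)-2$ for closed oriented $n$--manifolds, and iterating this gives $\chi(M)=\chi(N^{\#k})=k\bigl(\chi(N)-2\bigr)+2$. As $\chi(N)$ is an integer different from $2$, we have $|\chi(N)-2|\ge 1$, so
\[
k\ \le\ k\,\bigl|\chi(N)-2\bigr|\ =\ \bigl|\chi(M)-2\bigr|.
\]
From $1\le\chi(M)<2^{3(\log_2 n)^2}$ and $2^{3(\log_2 n)^2}>2$ we obtain $\bigl|\chi(M)-2\bigr|<2^{3(\log_2 n)^2}$, hence $k<2^{3(\log_2 n)^2}$.

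\textbf{Main obstacle.} All the substantive content sits in the two inputs imported from Theorem~\ref{LogAssumption} and its proof; the computations above are routine. The one point that is genuinely needed and not elementary is the bound $\chi(M)\ge 2$, as opposed to merely $\chi(M)>0$, used in the product case to exclude $|\chi(N)|=1$. This cannot be avoided by soft arguments: there are one--connected closed manifolds with $\chi=1$, for instance $(\s^3\times\s^5)\#\hh\pp^2$, so without the structural information that the fixed set of a maximal torus on such a positively curved manifold is not a single point, the product statement would fail for the $k$--fold products of that manifold.
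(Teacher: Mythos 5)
Your overall strategy coincides with the paper's: combine the upper bound of Theorem \ref{LogAssumption} with a lower bound $\chi(M)\ge 2$ and then do the arithmetic of Euler characteristics under products and connected sums. The bookkeeping in your two cases is fine (and your explicit treatment of $\chi(N)\le -2$ in the product case is slightly more careful than the paper's). But there is a genuine gap in how you obtain the crucial input $\chi(M)\ge 2$.

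You assert that the proof of Theorem \ref{LogAssumption} ``shows $\chi(M)=\chi(M^T)\ge 2$: the fixed set is nonempty, carries no odd rational cohomology, and is not a single point.'' The proof of Theorem \ref{LogAssumption} does not show that $M^T$ has no odd rational cohomology; it only bounds the \emph{even} Betti numbers of $M^T$ (in Case 1 via $4$--periodicity of $H^*(M;\Q)$, which controls even Betti numbers but says nothing about odd ones; in Case 2 via an induction that again only tracks $\beven$). The vanishing of $\sum b_{2i+1}(M^T)$ is exactly the content of Corollary \ref{OddBettis}, and under the bare hypothesis $\dim(T)\ge\log_{4/3}n$ it is only known when $n\equiv 0\bmod 4$; for $n\equiv 2\bmod 4$ one needs one of the extra hypotheses (\ref{OddBettiAss3})--(\ref{OddBettiAss5}) of that corollary. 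The paper closes this hole by using the product/connected-sum structure itself: since $\log_{4/3}n\ge 2\log_2 n$, Theorem \ref{thmP} gives $b_2(M)\le 1$; since $b_2(M)=k\,b_2(N)$ and one may take $k\ge 2$, this forces $b_2(M)=0$, and then Corollary \ref{OddBettis}(\ref{OddBettiAss3}) yields $\chi(M)\ge 2$ in all residues of $n$ mod $4$. Without some such argument your proof only works for $n\equiv 0\bmod 4$; as you yourself note with the example $(\s^3\times\s^5)\#\hh\pp^2$, the lower bound $\chi(M)\ge 2$ cannot be obtained by soft means, so this step must be supplied and cannot be imported from Theorem \ref{LogAssumption} alone.
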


We remark on the name of the corollary: In the first conclusion, we may start with a manifold $N^m$ and consider positively curved metrics on $N^{\times k}$ with symmetry rank $\log_{4/3}(\dim(N^{\times k}))$. The conclusion in this case is that $k < 3(\log_2(km))^2$, which again cannot be true for arbitrarily large $k$.

We also remark that, in some cases, we can replace $N^{\times k}$ by more general products or by total spaces of iterated fibrations. See Section \ref{Corollaries} for related statements.

\begin{maincor}\label{cor:SymmetricSpaces}
Assume $M^{2n}$ is a positively curved Riemannian manifold with the rational homotopy type of a simply connected, compact symmetric space $N$. If $M$ has symmetry rank at least $\log_{4/3} n+ 7$, then $N$ has the form $Q \times \s^{2n_1} \times\cdots\times \s^{2n_s}$ for some $0\leq s < 3(\log_2 n)^2$ and some
	\[Q\in\{\s^{2q}, \cc \pp^q, \hh \pp^q\}\cup\{\SO(p+q)/\SO(p)\times\SO(q)\st 2\leq p\leq3\}.\]
\end{maincor}

This improves Theorem A in \cite{Kennard2} by bounding the number of spherical factors. The proofs of Corollaries \ref{cor:StableHopf} and \ref{cor:SymmetricSpaces} are contained in Section \ref{Corollaries}.

\smallskip

Our next result concerns the positivity of the Euler characteristic. In particular, we require it to deduce Corollaries \ref{cor:StableHopf} and \ref{cor:SymmetricSpaces}.
\begin{main}\label{thm:OddBettis}
Let $M^{2n}$ be a one-connected, closed Riemannian manifold with positive sectional curvature and symmetry rank at least $2\log_2(2n) + 2$. If the second, third, or fourth Betti number of $M$ vanishes, then $\chi(M) \geq 2$.
\end{main}
The proof follows from a collection of previous results. It does however extend Theorem A of \cite{Kennard1}, at least for manifolds with symmetry rank $r \geq 2\log_2 n + 2$. For a discussion of related results, see Section \ref{sec:OddBettis}.

\smallskip

We take a moment here to summarize the techniques involved in the proofs, as well as to describe the layout of this article. Perhaps most importantly, Wilking's connectedness lemma and the resulting induction machinery and cohomological periodicity play a large role. We summarize these and other required results in Section \ref{Preliminaries}. Building upon this, we prove three new, convenient sufficient conditions for a manifold to have 4--periodic rational cohomology in Section \ref{NewTools} (see Propositions \ref{pro:Case1lemma}, \ref{pro:dk2}, and \ref{pro:new4per}).

Another key aspect of the proofs involves proving bounds on the topology of the fixed-point set $M^T$ of the torus action. For Theorem \ref{LogAssumption}, we accomplish this by adapting ideas from the first named author's Ph.D. thesis \cite{Amann09thesis}, where positive quaternionic K\"ahler manifolds with symmetry are studied. For Theorem \ref{LinearAssumptionDelta}, we analyze the direct sum of isotropy representations over multiple fixed points. The latter idea is a useful observation that does not seem to have been made before. It was inspired by a conversation with Burkhard Wilking (see Proposition \ref{pro:Griesmer}, and the discussion in Section \ref{NewTools}).

Sections \ref{ProofOfLogAssumption} and \ref{ProofOfLinearAssumptionDelta} contain the proofs of Theorems \ref{LogAssumption} and \ref{LinearAssumptionDelta}, respectively. We wish to emphasize that the proofs are independent of each other. In particular, among the four propositions we prove in Section \ref{NewTools}, only Propositions \ref{pro:Case1lemma} and \ref{pro:dk2} are used for Theorem \ref{LogAssumption}, while Theorem \ref{LinearAssumptionDelta} requires only Propositions \ref{pro:dk2}, \ref{pro:new4per}, and \ref{pro:Griesmer}.

Finally, the proof of Theorem \ref{thm:OddBettis} only requires results in Section \ref{Preliminaries}, as well as a few additional arguments. It is contained in Section \ref{sec:OddBettis}. Corollaries \ref{cor:StableHopf} and \ref{cor:SymmetricSpaces} follow from Theorems \ref{LogAssumption}, \ref{LinearAssumptionDelta}, and \ref{thm:OddBettis}. The proofs are contained in Section \ref{Corollaries}.

\bigskip

\bigskip

\ack We are grateful to Burkhard Wilking for a motivating discussion, as well as to Anand Dessai, Nicolas Weisskopf, and Wolfgang Ziller for commenting on earlier versions. We also wish to thank the referees for several suggestions to improve the presentation of the article.


\bigskip
\section{Preliminaries}\label{Preliminaries}
\bigskip

In this section, we summarize a number of previous results that will be used in the proofs in this article. We begin with two important results referenced in the introduction.

\begin{theo}[Maximal symmetry rank classification, Grove--Searle \cite{GroveSearle94}]\label{thm:GroveSearle}
If $T^r$ acts effectively by isometries on a closed, one-connected, positively curved Riemannian manifold, then $r \leq \floor{\frac{n+1}{2}}$ with equality only if $M$ is diffeomorphic to $\s^n$ or $\cc\pp^{n/2}$.
\end{theo}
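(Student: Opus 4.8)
The plan is to establish the bound and the rigidity statement together, by induction on $n = \dim M$, carried out over the slightly larger class of closed positively curved manifolds (and, where needed, orbifolds) that are \emph{not} assumed simply connected: this enlargement is forced because the objects produced in the inductive step are fixed-point sets of subtori, on which the residual action need not be effective and the quotient need not be simply connected. The geometric inputs are Berger's theorem (an isometric torus action on an even-dimensional positively curved manifold, or orbifold, has a nonempty fixed-point set), the fact that fixed-point sets of isometries are totally geodesic --- of even codimension for a circle action --- and Frankel's theorem (two totally geodesic submanifolds of a positively curved closed manifold whose dimensions add up to at least $n$ must intersect).

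For the bound, assume $r \ge 1$. If the action is almost free this is handled separately: passing to the quotient orbifold by a subcircle lowers the dimension while keeping an almost-free residual action, and Berger's theorem (applied once the dimension is even) shows that an almost-free torus action of rank $\ge 2$ forces the dimension down so far that only the trivial models remain. Otherwise there is a point $p$ with $\dim T_p \ge 1$, hence a circle $S^1 \subseteq T_p$ with $p \in M^{S^1} \subsetneq M$. Let $N$ be the component of $M^{S^1}$ through $p$, of even codimension $2k \ge 2$, and let $K \supseteq S^1$ be the (sub)torus acting trivially on $N$, of dimension $\kappa \ge 1$. Then $K$ acts faithfully on the normal space $\nu_p N \cong \mathbb{R}^{2k}$ --- a subtorus acting trivially there would act trivially near $N$, hence on all of $M$ by the rigidity of Killing fields, contradicting effectiveness --- so $\kappa \le k$; and $T/K$ acts effectively on the totally geodesic, positively curved submanifold $N$. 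By induction $r - \kappa \le \lfloor(\dim N + 1)/2\rfloor = \lfloor(n - 2k + 1)/2\rfloor$, whence $r \le \kappa + \lfloor(n-2k+1)/2\rfloor \le k + \lfloor(n-2k+1)/2\rfloor = \lfloor(n+1)/2\rfloor$.

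For rigidity, suppose $r = \lfloor(n+1)/2\rfloor$. Equality in the chain above forces $\kappa = k$ and the inductive bound to be sharp on $N$; in particular $K \cong T^k$ acts on $\nu_p N \cong \mathbb{R}^{2k}$ faithfully with exactly $k$ weight planes, whose weights then form a $\mathbb{Z}$-basis. Choosing the circle in $K$ that rotates a single such plane and fixes the remaining $k-1$, its fixed-point component through $p$ has codimension exactly $2$; call it $F$. The circle acts by rotation --- hence transitively --- on the normal $2$-sphere of $F$, so $M$ is \emph{fixed-point homogeneous} under this circle. The structure theory of such actions expresses $M$ as the union of the normal disk bundles of $F$ and of the set $B$ of points at maximal distance from $F$, glued along their common boundary; both $F$ and $B$ are totally geodesic, so Frankel's theorem gives $\dim F + \dim B \le n - 1$, i.e.\ $\dim B \le 1$. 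Both $F$ and $B$ inherit maximal-rank torus actions, so by induction each is a sphere, a complex projective space, or a point; and the equivariant gluing of the two disk bundles is rigid enough that, together with the requirement that the result be positively curved, only the two models $\s^n$ and $\cc\pp^{n/2}$ survive. The base cases of the induction (dimensions $\le 4$) are settled by the same decomposition.

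The step I expect to be the main obstacle is the rigidity: it reduces to a self-contained classification of circle-fixed-point-homogeneous positively curved manifolds, where positive curvature must be exploited globally --- through Frankel's theorem to bound $\dim B$, and through the rigidity of the two-disk-bundle gluing --- rather than through the purely local weight bookkeeping that already suffices for the inequality. A secondary burden is organising the induction over the non-simply-connected and orbifold cases so that it closes, which requires knowing at low rank precisely which lens spaces, real projective spaces, and weighted complex projective spaces can occur.
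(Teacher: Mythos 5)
This theorem is not proved in the paper at all: it is imported verbatim from Grove--Searle \cite{GroveSearle94} as a background tool (it serves as a base case and as the ``maximal rank'' obstruction in the inductive arguments of Sections 2--4), so there is no internal proof to compare yours against. Your sketch is, in outline, a faithful reconstruction of the original Grove--Searle argument: the inequality by induction on dimension, passing to a fixed-point component $N$ of a circle and balancing the rank $\kappa$ of the kernel against the number $k$ of weight planes of the normal isotropy representation; and the rigidity via the observation that equality forces some circle to have a codimension-two fixed-point component $F$, i.e.\ the action is fixed-point homogeneous, followed by the decomposition of $M$ into the normal disk bundles over $F$ and over the set $B$ at maximal distance from $F$. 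Two remarks. First, the orbifold detour for almost-free actions is unnecessary: for $r\geq 2$, Berger's theorem (applied to $T$ if $n$ is even, and to a codimension-one subtorus if $n$ is odd) already produces a point whose isotropy group has dimension at least one, and the case $r=1$ satisfies the bound trivially, with equality only in dimensions $1$ and $2$. Second, the rigidity half is where your sketch is genuinely incomplete, as you yourself anticipate: the assertion that $B$ is a totally geodesic submanifold (so that Frankel's theorem yields $\dim B\leq 1$) is not automatic and has to be extracted from convexity and critical-point arguments for the distance function from $F$, and the claim that ``the equivariant gluing of the two disk bundles is rigid enough'' is precisely the content of the diffeomorphism classification and occupies the bulk of \cite{GroveSearle94}. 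These are gaps in the write-up rather than wrong turns; the strategy is the correct one, and for the purposes of this paper the statement is simply quoted.
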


\begin{theo}[Wilking, \cite{Wilking03}]\label{thm:Wilking}
Let $M^n$ be a closed, one-connected Riemannian manifold with positive sectional curvature and symmetry rank $r$.
\begin{itemize}
\item (Homotopy classification) If $r\geq \frac{n}{4} + 1$ and $n \geq 10$, then $M$ is homotopy equivalent to $\s^n$, $\cc \pp^{n/2}$, $\hh\pp^{n/4}$, or $\mathrm{Ca}\pp^2$.
\item (Cohomology classification) If $r\geq \frac{n}{6}+1$ and $n\geq 6000$, then
	\begin{enumerate}
	\item $M$ is homotopy equivalent to $\s^n$ or $\cc\pp^{n/2}$,
	\item $M$ has the integral cohomology of $\hh\pp^{n/4}$,
	\item $n\equiv 2\bmod{4}$ and, for all fields $\mathbb{F}$, the cohomology algebra of $M$ with coefficients in $\mathbb{F}$ is isomorphic to either $H^*(\cc\pp^{n/2};\mathbb{F})$ or $H^*(\s^2\times\hh\pp^{\frac{n-2}{4}};\mathbb{F})$, or
	\item $n\equiv 3\bmod{4}$ and, for all fields $\mathbb{F}$, the cohomology algebra of $M$ with coefficients in $\mathbb{F}$ is isomorphic to either $H^*(\s^n;\mathbb{F})$ or $H^*(\s^3\times\hh\pp^{\frac{n-3}{4}})$.
	\end{enumerate}
\end{itemize}
\end{theo}

Next, we state a number of results whose importance to the Grove research program is already well established.
\begin{theo}[Conner, \cite{Conner57}]\label{thm:Conner}
If $T$ acts smoothly on a closed manifold $M$, then $\chi(M^T) = \chi(M)$. Moreover,
	\begin{enumerate}
	\item $\sum b_{2i}(M^T) \leq \sum b_{2i}(M)$, and
	\item $\sum b_{2i+1}(M^T) \leq \sum b_{2i+1}(M)$.
	\end{enumerate}
\end{theo}
The first statement will allow us to pull some information about $M^T$ up to $M$, while the second and third results will allow us to push information about $M$ or its submanifolds down to $M^T$.

\begin{theo}[Berger, \cite{Berger66,GroveSearle94}]\label{Berger}
Suppose $T$ is a torus acting by isometries on a closed, positively curved manifold $M^n$. If $n$ is even, then the fixed-point set $M^T$ is nonempty, and if $n$ is odd, then a codimension one subtorus has nonempty fixed-point set.
\end{theo}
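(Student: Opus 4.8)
The plan is to reduce both statements to a single assertion about the minimal dimension of a $T$--orbit on $M$: when $n$ is even this minimum equals $0$ (equivalently $M^T\neq\emptyset$), and when $n$ is odd it is at most $1$ (so at some $p$ the identity component $H$ of the isotropy group $T_p$ satisfies $\dim H\geq\dim T-1$, and this $H$ is the required codimension-one subtorus, or $p\in M^T$ if $\dim H=\dim T$). I would prove both by induction on $n$, so that I may assume the corresponding statement for all positively curved manifolds of dimension $<n$; the case $\dim T=0$ is vacuous, so assume $\dim T\geq 1$.

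For the inductive step, choose $p\in M$ whose orbit $Tp$ has minimal dimension $m$, put $H=(T_p)^0$, and let $N$ be the component of $\mathrm{Fix}(H)$ through $p$. The following classical facts carry the argument: $N$ is a closed, totally geodesic --- hence positively curved --- submanifold; $T$ preserves $N$, being abelian and connected; $\mathrm{codim}_M N$ is even, because by maximality of $\dim H$ the slice representation of $H$ at $p$ has no nonzero fixed vector and so decomposes into two-dimensional rotations; and every $T$--orbit meeting $N$ lies in $N$ and has dimension $\geq m$. Consequently, if $N\neq M$ then $N$ is a closed positively curved manifold with $\dim N<n$ of the same parity as $n$, carrying a $T$--action whose minimal orbit dimension is at least $m$; by the inductive hypothesis this forces $m=0$ when $n$ is even and $m\leq 1$ when $n$ is odd, which is exactly what we want. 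So we are reduced to the case $N=M$, i.e. $H$ is trivial and $T$ acts \emph{almost freely} (all isotropy groups finite), with $m=\dim T$.

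When $n$ is even, this residual case cannot occur: restricting the action to a circle subgroup, the generating Killing field $X$ is nowhere zero, contradicting the classical result of Berger that a nontrivial Killing field on a closed even-dimensional positively curved manifold vanishes somewhere. (I would recall the mechanism: at a minimum $p$ of $|X|^2$ one has $(\nabla X)_p\,X(p)=0$, so $X(p)\in\ker(\nabla X)_p$; as $(\nabla X)_p$ is skew-symmetric and $n$ is even, this kernel is even-dimensional and hence contains a unit vector $v\perp X(p)$; restricting $X$ to the geodesic with initial velocity $v$ and using the Jacobi equation gives $\tfrac12\,\mathrm{Hess}\,|X|^2(v,v)=-\sec(X(p),v)\,|X(p)|^2<0$, contradicting minimality.) This finishes the even case.

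The hard part is the odd case, because now a nowhere-zero Killing field genuinely can exist --- the Hopf fields on odd-dimensional spheres --- so I must rule out the residual possibility that a torus $T^r$ with $r\geq 2$ acts almost freely by isometries on a closed positively curved manifold $M$. I would prove this impossibility by a secondary induction on $\dim M$. If some $X$ in the Lie algebra of $T$ has non-constant norm, then the minimum set of $|X|^2$ is a proper closed totally geodesic submanifold (hence positively curved, of smaller dimension) on which $T^r$ still acts almost freely, contradicting the inductive hypothesis. Otherwise $|X|^2$ is constant for every $X$ in the Lie algebra, so $\nabla_XX=-\tfrac12\nabla|X|^2=0$; choosing linearly independent $X_1,X_2$, their values at any point $p$ span a $2$--plane by almost freeness, the $T$--orbit through $p$ is flat, its second fundamental form satisfies $\mathrm{II}(X_1,X_1)=(\nabla_{X_1}X_1)^{\perp}=0$, and the Gauss equation then yields $\sec(X_1(p),X_2(p))=-|\mathrm{II}(X_1(p),X_2(p))|^2\leq 0$, contradicting positive curvature. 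The base cases of both inductions are immediate (for $\dim M$ small, either $\dim T>\dim M$, impossible, or $M$ would have to be a flat torus). I expect essentially all of the real work to lie in this almost-free exclusion, together with the careful verification that the minimum set of $|X|^2$ is totally geodesic and that the orbit-dimension and parity bookkeeping in the main induction are airtight; everything else is routine.
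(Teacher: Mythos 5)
The paper does not prove this statement; it is quoted from Berger and Grove--Searle. Measured against the known proofs, your overall architecture is sound: the main induction on dimension via the fixed-point component $N$ of $H=(T_p)^0$ (with the even-codimension and $T$-invariance observations) correctly reduces everything to excluding almost free actions, your treatment of the even case via Berger's Killing-field argument is complete and correct, and your ``all norms constant'' branch (flat, totally geodesic orbits contradicting positive curvature) is also correct. The crux, as you rightly identify, is ruling out an almost free isometric $T^r$-action, $r\geq 2$.

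There is, however, a genuine gap exactly where you flag the need for ``careful verification'': the claim that the minimum set of $|X|^2$ is a totally geodesic submanifold. This is \emph{not} a classical fact about Killing fields, and it is false in general. For example, on a warped product $M=B\times_h S^1$ with metric $g_B+h^2\,d\theta^2$, the field $X=\partial_\theta$ is Killing with $|X|^2=h^2$, so its minimum set is $h^{-1}(\min h)\times S^1$; choosing $B=S^2$ round and $h$ minimized precisely along a non-great latitude circle $C$ makes $C\times S^1$ the minimum set, and it is not totally geodesic (the slices $B\times\{\theta_0\}$ are totally geodesic with the round metric, and geodesics tangent to $C$ leave $C$). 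So any proof of your claim would have to use positive curvature in an essential way, and no such argument is supplied; it is not even clear a priori that the minimum set is a manifold. This is precisely the point where Berger's original treatment of the odd case was incomplete, and the Grove--Searle repair does \emph{not} pass to the minimum set of a single $|X|^2$. Instead one minimizes $F(p,X)=|X(p)|^2$ jointly over $p\in M$ and $X$ in the unit sphere of the Lie algebra $\mathfrak t$, and exploits the second variation in \emph{both} variables: at the minimum $(p_0,X_0)$ one has $\nabla_{X_0}X_0=0$ and $Y_0(p_0)\perp X_0(p_0)$ for $Y_0\perp X_0$ in $\mathfrak t$; since $\ker(\nabla X_0)_{p_0}=\mathbb{R}X_0(p_0)$ one can pick $w$ with $\nabla_w X_0=Y_0(p_0)$, and the discriminant of the resulting second-variation quadratic in $t\mapsto(\exp_{p_0}(tw),\cos t\,X_0+\sin t\,Y_0)$ is incompatible with $\sec>0$. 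You should replace the ``pass to the minimum set'' induction in the odd almost-free case with an argument of this type (or prove the totally geodesic claim under positive curvature, which is the real missing content).
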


\begin{theo}[Frankel, \cite{Frankel61}]
Let $M^n$ be a closed, compact Riemannian manifold with positive sectional curvature. If $N_1^{n_1}$ and $N_2^{n_2}$ are totally geodesic submanifolds of $M$ with $n_1 + n_2 \geq n$, then $N_1\cap N_2$ is nonempty.
\end{theo}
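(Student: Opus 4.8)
The plan is to argue by contradiction using the first variation of arc length and the positivity of the curvature, exactly as in the classical synthesis-of-geodesics argument. Suppose $N_1$ and $N_2$ are disjoint. Since $M$ is compact, the distance $d(N_1,N_2)$ is realized by a geodesic $\gamma\co [0,L]\to M$ of length $L=d(N_1,N_2)>0$ meeting $N_1$ at $p=\gamma(0)$ and $N_2$ at $q=\gamma(L)$. First I would record that $\gamma$ is a minimizing geodesic, so it has no conjugate points in $(0,L)$, and that by first-variation minimality $\gamma'(0)\perp T_pN_1$ and $\gamma'(L)\perp T_qN_2$.

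The heart of the argument is a dimension count producing a parallel normal field that is simultaneously tangent to $N_1$ at $p$ and to $N_2$ at $q$. Parallel transport along $\gamma$ gives a linear isometry $P\co T_pM\to T_qM$; it maps the hyperplane $\gamma'(0)^\perp$ (which contains $T_pN_1$, since $\gamma'(0)\perp T_pN_1$) isomorphically onto $\gamma'(L)^\perp$ (which contains $T_qN_2$). Inside the $(n-1)$-dimensional space $\gamma'(L)^\perp$ we then have the two subspaces $P(T_pN_1)$ and $T_qN_2$, of dimensions $n_1$ and $n_2$ with $n_1+n_2\geq n>n-1$; hence they intersect nontrivially. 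Pick a unit vector $w$ in the intersection and let $W(t)$ be the parallel field along $\gamma$ with $W(L)=w$; then $W(0)\in T_pN_1$, $W(L)\in T_qN_2$, and $W\perp\gamma'$ everywhere.

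Now I would run the second variation of energy (or arc length) for the variation of $\gamma$ whose variation field is $W$, with the endpoint curves kept inside $N_1$ and $N_2$ respectively. Because $W$ is parallel, the index-form integrand $|\nabla_{\gamma'}W|^2 - \langle R(W,\gamma')\gamma',W\rangle$ reduces to $-\langle R(W,\gamma')\gamma',W\rangle = -K(W,\gamma')\,|W\wedge\gamma'|^2 < 0$ by positive sectional curvature, so the integral term is strictly negative. The two boundary terms are $\langle \nabla_W W, \gamma'\rangle$ evaluated at the endpoints, i.e.\ involve the second fundamental forms of $N_1$ and $N_2$ in the direction $\gamma'$; since $N_1$ and $N_2$ are totally geodesic, these vanish. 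Hence the second variation of the length of $\gamma$ among curves joining $N_1$ to $N_2$ is strictly negative, contradicting the fact that $\gamma$ realizes the minimal distance. Therefore $N_1\cap N_2\neq\emptyset$.

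The main obstacle — really the only subtle point — is the bookkeeping in the dimension count and making sure the parallel field $W$ is admissible as a variation field for a variation through curves with endpoints constrained to $N_1$ and $N_2$; this is exactly where $W(0)\in T_pN_1$ and $W(L)\in T_qN_2$ are needed, and where total geodesy of the $N_i$ is used to kill the boundary terms in the index form. Everything else (existence of the minimizing connecting geodesic by compactness, orthogonality from first variation, the sign of the curvature term) is routine. One should also note the argument only uses that $N_1,N_2$ are closed and totally geodesic and that $M$ is complete with positive curvature, which is all that is assumed.
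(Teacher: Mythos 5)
Your proof is correct and is exactly the classical second-variation argument of Frankel that the paper simply cites from \cite{Fra61} without reproducing: minimizing geodesic, parallel transport plus the dimension count $n_1+n_2\geq n$ inside the $(n-1)$-dimensional normal hyperplane, and the index form made strictly negative by positive curvature with boundary terms killed by total geodesy. No gaps; this matches the standard proof of the cited result.
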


The bound on the dimensions of $N_1$ and $N_2$ is equivalent to the bound $n - k_1 - k_2 \geq 0$, where $k_i$ is the codimension of $N_i$. This result was greatly improved by Wilking \cite{Wilking03}:
\begin{theo}[Connectedness lemma]\label{ConnectednessTheorem}
Suppose $M^n$ is a closed Riemannian manifold with positive sectional curvature.
 \begin{enumerate}
  \item If $N^{n-k}$ is a closed, embedded, totally geodesic submanifold of $M$, then $N\embedded M$ is $(n- 2k + 1)$--connected.
  \item If $N_1^{n-k_1}$ and $N_2^{n-k_2}$ are closed, embedded, totally geodesic submanifolds of $M$ with $k_1\leq k_2$, then $N_1\cap N_2\embedded N_2$ is $(n - k_1 - k_2)$--connected.
 \end{enumerate}
\end{theo}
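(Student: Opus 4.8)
The plan is to deduce both statements from Morse theory for the energy functional on a suitable path space, the positive curvature entering only through a Synge-type second-variation estimate. For (1), let $P$ be the space of paths $\gamma\co[0,1]\to M$ with $\gamma(0),\gamma(1)\in N$, and let $E$ be the energy functional on $P$. Since $N$ is closed and totally geodesic, the critical points of $E$ are precisely the geodesics meeting $N$ orthogonally at both endpoints, the constant paths form the minimum $E^{-1}(0)=N$, and this minimum is a nondegenerate critical submanifold (the second fundamental form of $N$ vanishes). Morse theory then presents $P$ as having the homotopy type of a complex obtained from $N$ by attaching cells of dimension $\geq\lambda_0$, where $\lambda_0$ is the least index of a nonconstant critical geodesic, so that the pair $(P,N)$ is $(\lambda_0-1)$-connected. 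Evaluation at the initial point is a fibration $P\to N$, split by the inclusion of the constant paths, whose fibre over $q\in N$ is the space $\Omega$ of paths from $q$ to $N$; hence $(P,N)$ is $(\lambda_0-1)$-connected if and only if $\Omega$ is. Finally $\Omega$ is the homotopy fibre of $N\embedded M$ --- it is the pullback of the contractible fibration of paths starting at $q$ along $N\embedded M$ --- so $\pi_i(\Omega)\cong\pi_{i+1}(M,N)$, and $\Omega$ being $(\lambda_0-1)$-connected says exactly that $N\embedded M$ is $\lambda_0$-connected. The whole statement thus reduces to the index bound $\lambda_0\geq n-2k+1$.

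To establish that bound, let $\gamma\co[0,\ell]\to M$ be a unit-speed nonconstant geodesic with $\gamma(0),\gamma(\ell)\in N$ and orthogonal to $N$ at both ends. As $N$ is totally geodesic, the index form on vector fields $X$ along $\gamma$ with $X\perp\gamma'$, $X(0)\in T_{\gamma(0)}N$, $X(\ell)\in T_{\gamma(\ell)}N$ has no boundary contribution: $I(X,X)=\int_0^\ell\bigl(|\nabla_{\gamma'}X|^2-\langle R(X,\gamma')\gamma',X\rangle\bigr)\,dt$. Write $\tau$ for parallel transport along $\gamma$, an isomorphism from $\gamma'(0)^\perp$ (the orthogonal complement of $\gamma'(0)$ in $T_{\gamma(0)}M$, of dimension $n-1$) onto $\gamma'(\ell)^\perp$. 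By orthogonality, $T_{\gamma(0)}N$ and $\tau^{-1}\bigl(T_{\gamma(\ell)}N\bigr)$ both lie in $\gamma'(0)^\perp$ and each have dimension $n-k$, so their intersection $V$ satisfies $\dim V\geq 2(n-k)-(n-1)=n-2k+1$. For $0\neq v\in V$ the parallel field $E_v(t)=\tau_t v$ is an admissible variation with $\nabla_{\gamma'}E_v=0$, so $I(E_v,E_v)=-\int_0^\ell K(E_v(t),\gamma'(t))\,|v|^2\,dt<0$ by positivity of the sectional curvature, and $(v,w)\mapsto -I(E_v,E_w)$ is positive definite on $V$ because $X\mapsto\langle R(X,\gamma')\gamma',X\rangle$ is positive definite on $\gamma'(t)^\perp$ for every $t$. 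Hence $I$ is negative definite on the $(\dim V)$-dimensional span of the $E_v$, and $\lambda_0\geq\dim V\geq n-2k+1$.

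For (2), we may assume $k_1+k_2\leq n$, since otherwise the claimed connectivity is negative; then Frankel's theorem gives $N_1\cap N_2\neq\emptyset$, and as $N_1,N_2$ are totally geodesic their intersection is clean, each component of $N_3=N_1\cap N_2$ being totally geodesic in $M$. Now run the same argument on the space $P'$ of paths $\gamma\co[0,1]\to M$ with $\gamma(0)\in N_1$, $\gamma(1)\in N_2$: the minimum of $E$ on $P'$ is $E^{-1}(0)=N_3$, evaluation at $\gamma(1)$ makes $P'\to N_2$ a fibration whose fibre over $q\in N_2$ is the homotopy fibre of $N_1\embedded M$ --- which by part (1) is $(n-2k_1)$-connected, so this fibration is $(n-2k_1+1)$-connected --- and the nonconstant critical geodesics are now those meeting $N_1$ orthogonally at $0$ and $N_2$ orthogonally at $1$. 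For such a $\gamma$ the identical parallel-field construction, using that $T_{\gamma(0)}N_1$ and $\tau^{-1}\bigl(T_{\gamma(\ell)}N_2\bigr)$ have dimensions $n-k_1$ and $n-k_2$ inside $\gamma'(0)^\perp$, yields a subspace of dimension $\geq(n-k_1)+(n-k_2)-(n-1)=n-k_1-k_2+1$ on which the (again boundary-term-free) index form is negative definite. Therefore $(P',N_3)$ is $(n-k_1-k_2)$-connected, so the section $N_3\embedded P'$ is $(n-k_1-k_2)$-connected; composing with the $(n-2k_1+1)$-connected map $P'\to N_2$ and using $k_1\leq k_2$ (so $n-k_1-k_2\leq n-2k_1+1$) shows $N_3\embedded N_2$ is $(n-k_1-k_2)$-connected, as claimed.

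The second-variation estimate above is short; the delicate point is the soft Morse theory, i.e.\ justifying the cell-attachment statement when the nonconstant critical submanifolds of $E$ may be degenerate. I expect to deal with this in the usual way --- either perturbing the metric generically (admissible since positive sectional curvature is an open condition) so that $E$ becomes Morse, or passing to finite-dimensional broken-geodesic approximations of $P$ and $P'$ on which the deformation lemmas apply directly --- noting that only the uniform index lower bound, proved above for every critical geodesic, is needed. A secondary point requiring care is the bookkeeping of connectivity through the fibrations and homotopy-fibre identifications, together with the fact that part (2) genuinely bootstraps on part (1).
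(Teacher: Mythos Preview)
The paper does not give its own proof of this theorem; it is quoted from Wilking \cite{Wilking03} as a tool in the preliminaries. Your proposal is essentially a correct reconstruction of Wilking's original argument: Morse theory for the energy functional on the relevant path space, with the index lower bound coming from the Synge-type observation that parallel transport of $T_{\gamma(0)}N$ (resp.\ $T_{\gamma(0)}N_1$) and $T_{\gamma(\ell)}N$ (resp.\ $T_{\gamma(\ell)}N_2$) into the common $(n-1)$--dimensional orthogonal complement of $\gamma'$ forces an intersection of the stated dimension on which the index form is negative definite. Your treatment of part (2) via the fibration $P'\to N_2$ and the bootstrap on part (1) is also in the spirit of Wilking's proof, and the connectivity bookkeeping is correct since $n-2k_1+1\geq n-k_1-k_2+1$ when $k_1\leq k_2$.

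One small caution on the ``soft'' part you flag at the end: the minimum $E^{-1}(0)=N_1\cap N_2$ in part (2) need not be connected or equidimensional, so it is not a single Morse--Bott critical manifold; this is harmless for the cell-attachment conclusion (treat components separately), but worth saying explicitly. Likewise, in both parts Wilking handles the possible degeneracy of nonconstant critical submanifolds not by perturbing the metric but by working directly with the index form on the finite-dimensional broken-geodesic approximation, where only the uniform index lower bound (which you have established) is needed. Either route you name will work.
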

Recall that an inclusion $N\to M$ is called $h$--connected if $\pi_i(M,N)=0$ for $i\leq h$. As a consequence, the map $H_i(N;\Z) \to H_i(M;\Z)$ induced by inclusion is an isomorphism for $i<h$ and a surjection for $i = h$, and the map $H^i(M;\Z) \to H^i(N;\Z)$ induced by inclusion is an isomorphisms for $i<h$ and an injection for $i=h$.

Given a highly connected inclusion of closed, orientable manifolds, applying Poincar\'e duality to each manifold produces a certain periodicity in cohomology (again see \cite{Wilking03}):

\begin{theo}\label{WilkingPD}
Let $M^n$ and $N^{n-k}$ be closed, connected, oriented manifolds. If the inclusion $N\embedded M$ is $(n-k-l)$--connected, then there exists $x\in H^k(M;\Z)$ such that the map $H^i(M;\Z) \to H^{i+k}(M;\Z)$ induced by multiplication by $x$ is a surjection for $l\leq i < n-k-l$ and an injection for $l<i\leq n-k-l$.
\end{theo}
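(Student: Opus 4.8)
The plan is to realize $x$ as a Poincar\'e dual and to translate multiplication by $x$ into a composite of maps whose behaviour is dictated by the connectedness hypothesis via Poincar\'e duality on the two manifolds.

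Write $\iota\co N\embedded M$ for the inclusion and let $[M]\in H_n(M;\Z)$, $[N]\in H_{n-k}(N;\Z)$ be fundamental classes, which exist since $M$ and $N$ are closed, connected and oriented. Define $x\in H^k(M;\Z)$ to be the Poincar\'e dual of $\iota_*[N]$, i.e. the unique class with $x\cap[M]=\iota_*[N]$. Using associativity of the cup and cap products together with the projection formula $\iota_*(\iota^*a\cap z)=a\cap\iota_*z$, one obtains for every $a\in H^i(M;\Z)$ the identity
\[
(a\cup x)\cap[M]\;=\;a\cap(x\cap[M])\;=\;a\cap\iota_*[N]\;=\;\iota_*\bigl(\iota^*a\cap[N]\bigr).
\]
Thus the square with top edge $\cup x\co H^i(M;\Z)\to H^{i+k}(M;\Z)$, left edge $\iota^*\co H^i(M;\Z)\to H^i(N;\Z)$, right edge the Poincar\'e duality isomorphism $\cap[M]\co H^{i+k}(M;\Z)\to H_{n-k-i}(M;\Z)$, and bottom edge $\iota_*\circ(\cap[N])\co H^i(N;\Z)\to H_{n-k-i}(M;\Z)$ commutes, and in it $\cap[M]$ is an isomorphism (Poincar\'e duality on $M$) while $\cap[N]\co H^i(N;\Z)\to H_{n-k-i}(N;\Z)$ is an isomorphism (Poincar\'e duality on $N$, which is closed, connected, oriented of dimension $n-k$). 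Consequently $\cup x$ is surjective (resp.\ injective) in degree $i$ if and only if the composite $\iota_*\circ(\cap[N])\circ\iota^*$ is.

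Next I extract the homological content of the hypothesis. Since $\iota$ is $(n-k-l)$--connected, relative Hurewicz gives $H_j(M,N;\Z)=0$ for $j\le n-k-l$; the long exact homology sequence of the pair then shows $\iota_*\co H_j(N;\Z)\to H_j(M;\Z)$ is an isomorphism for $j<n-k-l$ and onto for $j=n-k-l$, and the universal coefficient theorem applied to $H_*(M,N;\Z)$ together with the long exact cohomology sequence shows $\iota^*\co H^j(M;\Z)\to H^j(N;\Z)$ is an isomorphism for $j<n-k-l$ and injective for $j=n-k-l$. Now combine: for $l\le i<n-k-l$ the map $\iota^*$ is onto (indeed an isomorphism), $\cap[N]$ is an isomorphism, and $\iota_*$ is onto in degree $n-k-i\le n-k-l$, so the composite and hence $\cup x$ is onto; for $l<i\le n-k-l$ the map $\iota^*$ is injective, $\cap[N]$ is an isomorphism, and $\iota_*$ is injective in degree $n-k-i<n-k-l$, so the composite and hence $\cup x$ is injective. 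This is exactly the claimed periodicity.

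I expect the only delicate point to be the bookkeeping that converts "$(n-k-l)$--connected" into the precise integral isomorphism/injectivity/surjectivity ranges at the two boundary degrees $i=l$ and $i=n-k-l$; this requires applying relative Hurewicz and the universal coefficient theorem with care (in particular keeping track of $H_{j-1}(M,N;\Z)$ when passing to cohomology). Everything else is a formal manipulation of Poincar\'e duality, the cup/cap associativity law, and the projection formula, and is independent of curvature.
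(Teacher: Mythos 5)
Your proof is correct and is essentially the argument of Wilking \cite{Wilking03}, which is the source this paper cites for Theorem \ref{WilkingPD} (the paper itself gives no independent proof): take $x$ Poincar\'e dual to $\iota_*[N]$, identify $\cup\, x$ with $\iota_*\circ(\cap[N])\circ\iota^*$ via the projection formula, and read off the ranges of surjectivity and injectivity from the homological consequences of $(n-k-l)$--connectedness exactly as the paper records them after the connectedness lemma. The degree bookkeeping at $i=l$ and $i=n-k-l$ checks out.
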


This periodicity is especially strong when $l = 0$. For example, because $M$ is connected, $x^i$ generates $H^{ik}(M;\Z)$ for $0 < ik < n$. We call this property periodicity (see \cite{Kennard1}):

\begin{defi}\label{def:periodic} For an integer $k > 0$, a ring $R$, and a connected space $M$, we say that $H^*(M;R)$ is \textit{$k$--periodic} if there exists $x\in H^k(M;R)$ such that the map $H^i(M;R) \to H^{i+k}(M;R)$ induced by multiplication by $x$ is surjective for $0\leq i< n - k$ and injective for $0 < i \leq n - k$.
\end{defi}

Examples of manifolds with periodic cohomology include spheres, projective spaces, and some products such as $\s^2\times\hh \pp^m$ or $N\times \mathrm{Ca}\pp^2$ for any manifold $N$ of dimension less than eight.

In practice, we conclude periodicity from the connectedness lemma and Theorem \ref{WilkingPD}. Specifically, if $N_1$ and $N_2$ intersect transversely as in part (2) of the connectedness lemma, then $H^*(N_2;\Z)$ is $k_1$--periodic. It follows that the rational cohomology of $N_2$ is also $k_1$--periodic. The following refinement of this rational periodicity is proved in \cite{Kennard1}:

\begin{theo}[Periodicity theorem]\label{per}
Let $M^n$ be a closed, one-connected Riemannian manifold with positive sectional curvature. Let $N_1^{n-k_1}$ and $N_2^{n-k_2}$ be connected, closed, embedded, totally geodesic submanifolds that intersect transversely.
	If $2k_1 + 2k_2 \leq n$, the rational cohomology rings of ${M}$, ${N_1}$, ${N_2}$, and ${N_1\cap N_2}$ are $4$--periodic. In particular, $b_{2i}(M) \leq 1$ for all $i$.
\end{theo}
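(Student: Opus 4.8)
The plan is to prove the result in two stages: first that all four rational cohomology rings are $k$-periodic for $k:=\min(k_1,k_2)$, and then that such $k$-periodicity is here forced to be $4$-periodicity. Assume without loss of generality that $k_1\le k_2$, so $4k_1\le 2k_1+2k_2\le n$, and write $k=k_1$. A preliminary remark: applying the connectedness lemma (Theorem \ref{ConnectednessTheorem}) to $N_1\subseteq M$, to $N_2\subseteq M$, and to $N_1\cap N_2\subseteq N_2$, every connectivity that arises — namely $n-2k_i+1$ for $N_i\subseteq M$ and $n-k_1-k_2$ for $N_1\cap N_2\subseteq N_2$ — is at least $2$, since $n\ge 2k_1+2k_2$; hence $M$, $N_1$, $N_2$ and $N_1\cap N_2$ are simply connected, in particular $\mathbb{Q}$-orientable, so Poincar\'e duality and Theorem \ref{WilkingPD} apply to each.

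\textbf{Stage 1 ($k$-periodicity).} Since $N_1\cap N_2$ is totally geodesic of codimension $k$ in $N_2$, part~(2) of the connectedness lemma makes $N_1\cap N_2\hookrightarrow N_2$ into $(\dim N_2-k)$-connected, i.e.\ the borderline case $l=0$ of Theorem \ref{WilkingPD}; hence there is $x\in H^{k}(N_2;\mathbb{Z})$ making $H^*(N_2;\mathbb{Z})$, and a fortiori $H^*(N_2;\mathbb{Q})$, fully $k$-periodic. Now $N_2\hookrightarrow M$ is $(n-2k_2+1)$-connected and $n-2k_2+1>k$, so restriction is a ring isomorphism $H^j(M;\mathbb{Q})\cong H^j(N_2;\mathbb{Q})$ for $j\le n-2k_2$; lifting $x$ to $x_M\in H^{k}(M;\mathbb{Q})$ and using naturality of cup products, multiplication by $x_M$ is surjective and injective in the corresponding low range of degrees. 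Applying Theorem \ref{WilkingPD} instead to $N_1\subseteq M$ gives $k$-periodicity of $M$ with defect $k-1$ but valid through degree $\approx n-2k$; as $H^{k}(M;\mathbb{Q})$ is at most one-dimensional, both statements are governed by the same class, and splicing them (the defect $k-1$ is absorbed because the clean range already reaches degree $n-2k_2\ge 2k$) yields honest periodicity of $M$ through degree $\approx n-2k$. Finally Poincar\'e duality on $M$ mirrors this into the top degrees; since $n\ge 4k$, the low range and its mirror exhaust $\{0,\dots,n\}$, so $H^*(M;\mathbb{Q})$ is $k$-periodic. Then $N_1$, $N_2$, $N_1\cap N_2$ each have rational cohomology agreeing with that of $M$ in low degrees and each satisfies Poincar\'e duality, hence each is $k$-periodic too.

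\textbf{Stage 2 (from $k$ to $4$).} If the periodicity class of $N_2$ vanishes, then $N_2$ — and hence, by the low-degree agreement plus Poincar\'e duality, each of the four spaces — is a rational cohomology sphere, which is trivially $4$-periodic. Otherwise the class is some $0\ne x\in H^{k}(N_2;\mathbb{Q})$; injectivity of multiplication by $x$ on $H^{k}(N_2;\mathbb{Q})$ applied to $x$ forces $x^2\ne0$, so $k$ is even, and since $\dim N_2=n-k_2\ge 2k_1+k_2\ge 3k$ the same injectivity on $H^{2k}(N_2;\mathbb{Q})$ gives $x^3\ne0$. Writing $H^*(N_2;\mathbb{Q})\cong\mathbb{Q}[x]/(x^{s+1})\otimes B$ with $B$ concentrated in degrees $<k$ (the standard shape of a periodic rational Poincar\'e duality algebra), this means $s\ge3$. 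For $k\in\{2,4\}$ a direct check — with $B$ necessarily $\mathbb{Q}$, $H^*(\s^2;\mathbb{Q})$, or $H^*(\s^3;\mathbb{Q})$ — shows that $x^2$, resp.\ $x$, realizes $4$-periodicity of $N_2$, and similarly of $M$, $N_1$, $N_1\cap N_2$. The remaining possibility $k\ge6$ (even, $s\ge3$) is excluded: by the \emph{integral} periodicity of $N_2$ from Theorem \ref{WilkingPD}, torsion is controlled enough that, modulo a suitable prime $p$, the algebra $H^*(N_2;\mathbb{F}_p)$ contains a truncated polynomial subalgebra $\mathbb{F}_p[x]/(x^4)$ on the even-degree class $x$; but by Adams-type restrictions on realizable truncated polynomial algebras (the circle of ideas around the Hopf invariant one problem), a height-$\ge4$ truncated polynomial algebra on an even generator can only have generator degree $1$, $2$, or $4$, contradicting $k\ge6$. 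Hence $k\le4$ throughout, and the theorem follows.

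\textbf{Main obstacle.} I expect the genuinely hard step to be the case $k\ge6$ in Stage~2: one must isolate a bona fide truncated polynomial subalgebra of the mod-$p$ cohomology (dealing with the tensor factor $B$ and with torsion, via the integral periodicity) and then invoke the deep realizability restrictions. It is exactly here that the \emph{factor of two} in the hypothesis $2k_1+2k_2\le n$ does essential work: it forces $\dim N_2\ge3k$ and hence a truncated polynomial piece of height at least four, which eliminates the $\mathrm{Ca}\pp^2$-type exception in degree $8$. A secondary technical point is the Poincar\'e-duality splicing in Stage~1 that is needed to propagate periodicity from the low degrees of $M$, through its submanifolds, to all of $M$.
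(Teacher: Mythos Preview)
First, note that the paper does not prove this theorem; it is quoted from \cite{Kennard-pre1}, so the comparison below is with the argument there. Your two-stage architecture matches that proof, and Stage~1 is essentially right: the transverse intersection gives honest $k_1$--periodicity of $N_2$ via Theorem~\ref{WilkingPD} with $l=0$, and the splicing up to $M$ can be made to work (the Poincar\'e-duality step at the top is more delicate than you indicate---knowing equality of Betti numbers is not yet knowing that multiplication by $x_M$ is an isomorphism---but it goes through along the lines of the proof of Proposition~\ref{pro:new4per}).

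The genuine gap is in Stage~2 for $k\ge 6$. Adams' Hopf-invariant-one theorem and its relatives constrain spaces whose \emph{entire} mod-$p$ cohomology is a truncated polynomial ring; they say nothing about a space whose cohomology merely \emph{contains} $\mathbb{F}_p[x]/(x^4)$ as a subalgebra. In your setting the factor $B$ (the cohomology in degrees below $k$) will typically be nontrivial, and in fact the algebra need not split as a tensor product at all, so there is no way to ``isolate'' the truncated polynomial piece as the cohomology of an honest space and then invoke realizability obstructions. The actual proof in \cite{Kennard-pre1} stays within the same Steenrod-algebra circle of ideas but proceeds differently: one works prime by prime with the $\mathbb{Z}_p$--periodicity inherited from the integral periodicity of $N_2$, and applies Steenrod squares (for $p=2$) and Steenrod powers (for odd $p$) \emph{directly to the periodicity element} $x$. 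Using the Adem relations together with the periodicity one shows that some $\mathrm{Sq}^i x$ or $P^i x$ of strictly smaller positive degree again induces periodicity, and iterating drives the period down to a divisor of $4$. Your diagnosis of \emph{where} the hypothesis $2k_1+2k_2\le n$ enters---forcing $\dim N_2\ge 3k$ and hence $x^3\ne 0$---is exactly right, but its role is to make these Steenrod computations close up, not to trigger an Adams-type realizability contradiction.
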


While these connectedness and periodicity theorems have no symmetry assumptions, their main applications have been to the study of positively curved manifolds with large symmetry. Indeed, fixed point sets of isometries are totally geodesic, and with enough isometries, one can guarantee the existence of totally geodesic submanifolds with small codimension and pairs of submanifolds that intersect transversely. In the next section, we illustrate how this works by considering three situations in which one can use the periodicity theorem to conclude that a manifold has $4$--periodic rational cohomology.

Another consequence of the connectedness and periodicity theorems is the following, which we use in the proofs of Proposition \ref{pro:new4per} and Theorem \ref{thm:OddBettis}.

\begin{theo}[\cite{Kennard2}, Theorem 2.2]\label{thmP}
Let $n\geq c\geq 2$ be even integers, and let $M^n$ be a closed, one-connected manifold with positive sectional curvature. Assume $T$ is a torus acting effectively by isometries on $M$ such that
	\[\dim(T) \geq 2\log_2 n + \frac{c}{2} - 1.\]
For all $x\in M^T$, there exists $H\subseteq T$ such that $N=M^H_x$ satisfies
	\begin{enumerate}
	\item $H^*(N;\Q)$ is $4$--periodic,
	\item $N \subseteq M$ is $c$--connected,
	\item $\dim(N) \equiv n\bmod{4}$, and
	\item $\dim(N) \geq c+4$.
	\end{enumerate}
In particular, if $\dim(N)\geq 8$, there exists $x\in H^4(M;\Q)$ such that the maps $H^i(M;\Q) \to H^{i+4}(M;\Q)$ induced by multiplication by $x$ are surjective for $0 \leq i \leq c - 4$ and injective for $0 < i \leq c - 4$.
\end{theo}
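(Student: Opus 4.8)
The plan is to construct the subtorus $H$ and the submanifold $N=M^H_x$ by a descent through a chain of totally geodesic submanifolds cut out by subtori of $T$, using the connectedness lemma (\thm{ConnectednessTheorem}) to propagate connectivity down the chain and the periodicity theorem (\thm{per}) to extract $4$--periodicity, with the $2\log_2 n$ appearing because the descent must be run on two complementary weight directions in order to produce a transversely intersecting pair.

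First I would fix $x\in M^T$, which exists by \thm{Berger} since $n$ is even, and analyze the isotropy representation of $T$ on $T_xM$, writing it as the fixed subspace $T_x(M^T_x)$ together with two--dimensional weight spaces $V_{\alpha_1},\dots,V_{\alpha_m}$. For a subtorus $H\le T$ the component $M^H_x$ is a fixed--point component of isometries, hence totally geodesic; its normal space in $M$ is the sum of the $V_{\alpha_i}$ on which $H$ acts nontrivially, so its codimension is even and additive along chains $T\supseteq H_1\supseteq H_2\supseteq\cdots$; and two such components through $x$ intersect transversely precisely when their normal spaces are complementary. The problem of choosing $H$ thus becomes the combinatorics of choosing which weights to annihilate.

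The core is an iteration producing subtori and their fixed--point components $M=N_0\supseteq N_1\supseteq\cdots\supseteq N_l=N$ through $x$, where at stage $j$ one uses positive curvature (Frankel's theorem and \thm{ConnectednessTheorem}) together with the isotropy representation on $T_xN_j$ to establish a dichotomy. Either the residual torus acting on $N_j$ already admits two subtori whose fixed--point components through $x$ intersect transversely with codimensions $k_1,k_2$ satisfying $k_1+k_2\le\tfrac12(\dim N_j-c)$ --- in which case their intersection $N$ has $4$--periodic rational cohomology by \thm{per}, is $c$--connected in $N_j$ by part (2) of \thm{ConnectednessTheorem}, and the descent stops --- or else $N_j$ must be replaced by a strictly smaller fixed--point component $N_{j+1}$, consuming one more circle, and the room available for the remaining descent shrinks by a definite factor, so that the second alternative can be invoked at most $\log_2 n$ times on each of the two weight directions. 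Bookkeeping the corank then shows that $2\log_2 n+\tfrac{c}{2}-1$ circles suffice to reach the stopping case --- or the extreme case $N_l=M^T_x$, which is treated directly --- the summand $\tfrac{c}{2}-1$ guaranteeing that each codimension jump can be kept at most $\tfrac12(\dim N_{j-1}-c)$. Applying part (1) of \thm{ConnectednessTheorem} at each step and composing, $N\embedded M$ is $c$--connected; replacing the final subtorus by one annihilating one more or one fewer weight space arranges $\dim N\equiv n\bmod 4$ without disturbing $4$--periodicity or connectivity; and then $\dim N\ge c+4$ follows from $N$ being orientable, the $c$--connectedness of $N\embedded M$, $n\ge c$, and $\dim N\equiv n\bmod 4$. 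The final ``in particular'' clause is then formal: $c$--connectedness gives $H^i(M;\Q)\cong H^i(N;\Q)$ for $i<c$ and an injection $H^c(M;\Q)\embedded H^c(N;\Q)$, so the degree--$4$ periodicity generator of $N$ pulls back to $x\in H^4(M;\Q)$, and chasing a class through these maps converts the $4$--periodicity of $H^*(N;\Q)$ into surjectivity of multiplication by $x$ for $0\le i\le c-4$ and injectivity for $0<i\le c-4$.

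The hard part will be the dichotomy with uniform control: proving that whenever no transverse pair of the required total codimension is visible at stage $j$, the greedily chosen circle genuinely cuts the remaining room by a definite fraction, so that the descent terminates after logarithmically many steps, while simultaneously keeping every codimension jump under the cap that part (1) of \thm{ConnectednessTheorem} needs for $c$--connectedness and leaving enough slack for the $\bmod 4$ adjustment. A secondary obstacle is the degenerate endpoint, when the descent reaches $M^T_x$ without ever exhibiting a transverse pair: there one must still extract $4$--periodicity from the small codimension of $M^T_x$ via \thm{ConnectednessTheorem} and \thm{WilkingPD} --- possibly ending in a case where $M$ is rationally a sphere or a projective space, which is consistent with the stated conclusions --- and verify that the dimension--mod--$4$ correction can still be performed there.
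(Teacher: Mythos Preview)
The paper does not actually prove \thm{thmP}; it is quoted verbatim as \cite[Theorem 2.2]{Kennard-pre2} and used as a black box. So there is no in-paper proof to compare against. That said, your outline is recognisably the strategy behind the cited result: fix $x\in M^T$, descend through a chain of totally geodesic fixed-point components $M=N_0\supseteq N_1\supseteq\cdots$ cut out by involutions in $T$, and terminate when two of them intersect transversely with small enough total codimension to invoke the periodicity theorem (\thm{per}); then use \thm{ConnectednessTheorem} to propagate $c$--connectedness up to $M$. Your reading of the ``in particular'' clause is also right: it is a formal consequence of $c$--connectedness and the $4$--periodicity of $N$.

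Where your plan is loosest is exactly where the content lies. First, your heuristic that $2\log_2 n$ arises because the descent must be ``run on two complementary weight directions'' is not how the bound is actually produced in \cite{Kennard-pre2}; there the $2\log_2 n$ comes from a Griesmer-type argument (compare the proof of \PROP{lem:Griesmer} here and Lemma~1.8 in \cite{Kennard-pre2}) ensuring that at each stage one can find an involution whose fixed-point component has codimension at most half of what remains, so that the chain has length $O(\log n)$, together with the need to retain enough residual rank at the bottom to manufacture the transverse pair. You should replace the informal ``two directions'' picture with that combinatorial mechanism. Second, your dichotomy is stated but not established: you assert that if no transverse pair of total codimension at most $\tfrac12(\dim N_j-c)$ exists then the greedy step shrinks the ``room'' by a definite factor, but you do not say what quantity is shrinking or why the absence of a good transverse pair forces it. In the actual argument this is a concrete codimension-halving statement obtained from the isotropy representation and the Griesmer bound, and it needs to be written down. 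Third, the $\bmod\ 4$ adjustment is more delicate than ``annihilate one more or one fewer weight space'': you must do this while preserving both the $4$--periodicity (which came from a specific transverse intersection) and the $c$--connectedness bound, and you have not indicated how. None of these are fatal, but as written the proposal is a correct table of contents rather than a proof.
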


\bigskip
\section{New tools}\label{NewTools}
\bigskip

In this section, we prove four propositions that provide the essential tools for the proofs of the main theorems. The first three (Propositions \ref{pro:Case1lemma}, \ref{pro:dk2}, and \ref{pro:new4per}) use the connectedness and periodicity theorems. They provide convenient sufficient conditions for a manifold to have $4$--periodic rational cohomology. Since such manifolds have Euler characteristic and even Betti numbers bounded above by those of complex projective space, these results provide important information required for the proofs of Theorems \ref{LogAssumption} and \ref{LinearAssumptionDelta}.

The final proposition (Proposition \ref{pro:Griesmer}) builds on a technique of Wilking whereby one applies the theory of error correcting codes to isotropy representations. Inspired  by a suggestion of Wilking, we extend this idea to the direct sum of isotropy representations. The result is that we obtain components of fixed-point sets of involutions that contain a large portion of the fixed-point set of the torus action. Proposition \ref{pro:Griesmer} is crucial to the proof of Theorem \ref{LinearAssumptionDelta}.

\begin{prop}\label{pro:Case1lemma}
Let $M^n$ be a closed, one-connected Riemannian manifold with positive sectional curvature, and let $T$ be a torus acting effectively by isometries on $M$. Assume $M^T$ is nonempty, and let $x\in M^T$. If there exists a subgroup $\Z_2^j \subseteq T$ with $j\geq \floor{\log_2 n} - 1$ such that every $\tau\in\Z_2^j$ satisfies $\cod(M^\tau_x) \leq n/4$, then $H^*(M;\Q)$ is $4$--periodic.
\end{prop}

\begin{proof}
Inductively choose a $j$ algebraically independent involutions \linebreak[4]$\tau_1,\ldots,\tau_j\in\Z_2^j$ such that
	\[k_i = \cod\of{(N_{i-1})^{\tau_i}_x \subseteq N_{i-1}} \geq
	        \cod\of{(N_{i-1})^{\tau}_x   \subseteq N_{i-1}}\]
for all $\tau\in\Z_2^i\setminus\langle\tau_1,\ldots,\tau_{i-1}\rangle$, where $N_h = M^{\langle\tau_1,\ldots,\tau_h\rangle}_x$ for all $h\geq 1$ and $N_0=M$.

By maximality of $k_i$, we have both of the estimates
	\begin{eqnarray*}
	k_i &\geq& \cod\of{(N_{i-1})^{\tau_{i+1}}_x \subseteq N_{i-1}} = k_{i+1} + l~\mathrm{and}\\
	k_i &\geq& \cod\of{(N_{i-1})^{\tau_i\tau_{i+1}}_x \subseteq N_{i-1}} = k_{i+1} + (k_i-l),
	\end{eqnarray*}
where $l$ is the dimension of the intersection of the $(-1)$--eigenspaces of the actions of $\tau_{i+1}$ and $\tau_i\tau_{i+1}$ on the normal space of $N_{i-1}$. Geometrically, $l$ is the codimension of $(N_{i-1})^{\langle\tau_i,\tau_{i+1}\rangle}_x$ inside $(N_{i-1})^{\tau_i\tau_{i+1}}_x$. Together, these two estimates imply that $k_i\geq 2k_{i+1}$ for all $i\geq 1$. We draw two conclusions from this. First,
	\[k_j \leq k_{j-1}/2 \leq \cdots \leq k_1/2^{j-1} \leq n/2^{j+1} \leq n/2^{\floor{\log_2 n}} < 2.\]
Since $k_j$ is even, we conclude that $k_j = 0$. Second, for all $1\leq h\leq j$,
	\[4k_h + (k_{h-1} + \ldots + k_1) \leq \frac{n}{2} + \frac{n}{2^h} \leq n,\]
so we can conclude both of the estimates $2k_h \leq \dim\of{N_{h-1}}$ and
	\[\dim\of{N_{h-1}} - 2\cod\of{N_h \subseteq N_{h-1}} + 1 > \frac{1}{2} \dim\of{N_{h-1}}.\]
We will use the first estimate in a moment. The second estimate implies that the inclusion $N_h \to N_{h-1}$ is $c$--connected with $c > \frac{1}{2}\dim{N_{h-1}}$. As a consequence, if $H^*(N_h;\Q)$ is $4$--periodic for some $h$, then $H^*(M;\Q)$ would be $4$--periodic as well. We will use this fact later.

Let $2\leq i \leq j$ be the the minimal index such that $k_i = 0$. For $1\leq h < i$, let $l_h$ denote the dimension of the intersection of the $(-1)$--eigenspaces of the action of $\tau_h$ and $\tau_i$ on $T_x(N_{h-1})$. Geometrically $l_h$ is the codimension of $(N_{h-1})^{\langle\tau_h,\tau_i\rangle}_x$ inside $(N_{h-1})^{\tau_h\tau_i}_x$. By replacing $\tau_i$ by $\tau\tau_i$ for some $\tau\in\langle\tau_1,\ldots,\tau_{i-1}\rangle$, we may assume that $l_h \leq k_h/2$ for all $1\leq h<i$.

Let $1\leq h < i$ be the maximal index such that $l_h > 0$. Observe that some $l_h>0$ since the subgroup $\Z_2^j$ acts effectively on $M$. It follows that $N_h$ is the transverse intersection of $(N_{h-1})^{\tau_i}_x$ and $(N_{h-1})^{\tau_i\tau_h}$ inside $N_{h-1}$. Since
	\[2\cod\of{(N_{h-1})^{\tau_i}_x       \subseteq N_{h-1}}
	 +2\cod\of{(N_{h-1})^{\tau_i\tau_h}_x \subseteq N_{h-1}}\]
is equal to $2l_h + 2(k_h-l_h) = 2k_h \leq \dim\of{N_{h-1}}$, we conclude that \linebreak[4]$H^*(N_{h-1};\Q)$ is $4$--periodic by the periodicity theorem. As already established, this implies that $H^*(M;\Q)$ is $4$--periodic.
\end{proof}

The second sufficient condition for rational $4$--periodicity is an immediate consequence of the periodicity theorem.

\begin{prop}\label{pro:dk2}
Let $M^n$ be a closed, one-connected, positively curved Riemannian manifold, let $T$ be a torus acting isometrically and effectively on $M$, and let $T' \subseteq T$ denote a subtorus that fixes a point $x \in M$.

If there exists an involution $\sigma \in T'$ such that $M^\sigma_x$ has $\dim(M^\sigma_x) \geq \frac{n}{2}$ and is fixed by another involution in $T'$, then $M$ has $4$--periodic rational cohomology. In particular, this applies if $M^\sigma_x$ is fixed by a two-dimensional torus in $T'$.
\end{prop}

\begin{proof}
Choose a non-trivial involution $\tau\in T'$ not equal to $\sigma$. Since $M^\sigma_x$ is fixed by $\tau$, it is contained in $M^\tau_x$. Moreover, both inclusions $M^\sigma_x \subseteq M^\tau_x \subseteq M$ are strict since the action of $T$ is effective.

Similarly, $M^\sigma_x \subseteq M^{\sigma\tau}_x \subseteq M$ with both inclusions strict. From the isotropy representation of $\Z_2^2=\langle\sigma,\tau\rangle\subseteq T'$ at the fixed point $x$, it follows that $M^\sigma_x$ is the transverse intersection of $M^\tau_x$ and $M^{\sigma\tau}_x$. Since
	\[2\cod(M^\tau_x) + 2\cod(M^{\sigma\tau}_x) = 2\cod(M^\sigma_x) \leq n,\]
the periodicity theorem implies that $H^*(M;\Q)$ is $4$--periodic.
\end{proof}

The third sufficient condition for rational $4$--periodicity is the following. Its proof is a simple combination of the connectedness lemma and Theorems \ref{WilkingPD} and \ref{thmP}.

\begin{prop}\label{pro:new4per}
Let $M^n$ be a closed, one-connected manifold with positive sectional curvature. If $M$ both contains a totally geodesic submanifold of codimension $k\leq \frac{n+2}{4}$ and has symmetry rank at least
$2\log_2 n + \frac{k}{2} + 1$, then $H^*(M;\Q)$ is $4$--periodic.
\end{prop}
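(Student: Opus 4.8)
The plan is to pin down $H^*(M;\Q)$ by overlaying three sources of periodicity — one from the given totally geodesic submanifold via the connectedness lemma, one from the torus action via Theorem~\ref{thmP}, and one from Poincar\'e duality on $M$ — and then to check that together they control every degree.

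First I would feed the submanifold into the connectedness lemma. Writing $N^{n-k}$ for the given totally geodesic submanifold, Theorem~\ref{ConnectednessTheorem}(1) makes the inclusion $N\embedded M$ $(n-2k+1)$-connected, and the hypothesis $k\le\frac{n+2}{4}$ forces $n-2k+1\ge\frac n2$, i.e.\ the inclusion is more than half-connected. Theorem~\ref{WilkingPD}, applied with $l=k-1$, then produces a class $z\in H^k(M;\Q)$ such that multiplication by $z$ sends $H^i(M;\Q)$ onto $H^{i+k}(M;\Q)$ for $k-1\le i<n-2k+1$ and injectively for $k-1<i\le n-2k+1$; thus $M$ is ``$k$-periodic in the middle range''. (If $N$ happens to be non-orientable one replaces it by its orientation double cover, or observes that Theorem~\ref{WilkingPD} still outputs $z$ on the orientable manifold $M$.)

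Next I would bring in the symmetry. Let $c$ be the largest even integer with $c\le k+4$, so that $\frac c2-1\le\frac k2+1$; then the symmetry-rank hypothesis is at least what Theorem~\ref{thmP} requires, and one checks $c\le n$ as soon as $n\ge6$ — the finitely many smaller dimensions being either vacuous or covered by Theorem~\ref{thm:GroveSearle}, in which case $M$ is $\s^n$ or $\cc\pp^{n/2}$ and the conclusion is immediate, and when $n$ is odd one first descends, using Berger's theorem and the connectedness lemma, to a highly connected even-dimensional totally geodesic submanifold. Theorem~\ref{thmP} then yields $x\in H^4(M;\Q)$ with multiplication by $x$ surjective on $H^i(M;\Q)$ for $0\le i\le c-4$ and injective for $0<i\le c-4$; in particular $M$ is $4$-periodic through degree at least $k-1$. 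Since $M$ is a closed oriented $n$-manifold, multiplication by $x$ on $H^i(M;\Q)$ is the transpose (under Poincar\'e duality) of multiplication by $x$ on $H^{n-i-4}(M;\Q)$, so this low-degree periodicity dualizes into the matching $4$-periodicity in a band of high degrees ending at $n-4$.

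Finally I would glue the pieces. Because $M$ is $4$-periodic up through degree $k$, the space $H^k(M;\Q)$ equals $x^{\floor{k/4}}\cdot H^{\,k-4\floor{k/4}}(M;\Q)$, so the middle class factors as $z=x^{\floor{k/4}}\,\eta$; plugging this into the $k$-periodicity of the first step forces multiplication by $x$ to be onto, and injective, across a middle band of degrees (roughly $\{2k-5,\dots,n-k-4\}$ for surjectivity and $\{k,\dots,n-2k+1\}$ for injectivity), together with the Poincar\'e duals of these bands. Combining the low band, the middle band, and the high band and using $k\le\frac{n+2}{4}$ to see that they exhaust $\{0,\dots,n-4\}$, one concludes that $x$ is a periodicity class, i.e.\ $H^*(M;\Q)$ is $4$-periodic. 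I expect the real obstacle to be precisely this overlap check: for moderate $k$, and most acutely for the extremal value $k=\floor{\frac{n+2}{4}}$, a naive count leaves a short gap in the lower-middle degrees near $\{k+1,\dots,2k-6\}$, and closing it should require squeezing a little more connectivity out of the situation — for instance by iterating the connectedness lemma on a deeper totally geodesic submanifold inside $N$, or by invoking the refined rational-periodicity results underpinning Theorem~\ref{per} — rather than the three bands alone.
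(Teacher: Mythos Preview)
Your overall strategy matches the paper's: use Theorem~\ref{WilkingPD} on the submanifold to get $y\in H^k(M;\Q)$ inducing $k$--periodicity in the middle range, apply Theorem~\ref{thmP} with $c=k+4$ to get $x\in H^4(M;\Q)$ inducing $4$--periodicity at the bottom (whence $y=ax^{k/4}$, or in your notation $z=x^{\floor{k/4}}\eta$), and then glue. But the gap you flag is genuine for the way you organise the endgame, and your proposed fixes --- iterating the connectedness lemma inside $N$ or invoking the machinery behind Theorem~\ref{per} --- are not what closes it.

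The paper avoids the difficulty with two purely algebraic moves you are missing. First, it reduces entirely to injectivity: once $\cdot\, x\colon H^i(M;\Q)\to H^{i+4}(M;\Q)$ is injective for every $0<i\le n-4$, Poincar\'e duality gives $b_i\le b_{i+4}=b_{n-i-4}\le b_{n-i}=b_i$, so these injections are isomorphisms for dimension reasons, and surjectivity at $i=0$ is already supplied by Theorem~\ref{thmP}. You instead track surjectivity and injectivity in parallel, and it is precisely surjectivity that produces your gap $\{k+1,\dots,2k-6\}$. Second, the one injectivity range not covered by your low band, your middle band, and their Poincar\'e duals --- namely $2k\le i\le n-k-3$, which is exactly the dual of your surjectivity gap --- is handled by commutativity of cup product alone: the composite
\[H^{i-k}(M;\Q)\xrightarrow{\cdot\, x}H^{i-k+4}(M;\Q)\xrightarrow{\cdot\, y}H^{i+4}(M;\Q)\]
is injective by the ranges already established, and since it equals $H^{i-k}\xrightarrow{\cdot\, y}H^{i}\xrightarrow{\cdot\, x}H^{i+4}$ with the first factor an isomorphism, $\cdot\, x$ is injective on $H^i$. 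The hypothesis $k\le\frac{n+2}{4}$ is precisely what makes these four injectivity ranges cover $\{1,\dots,n-4\}$; no further geometric input is needed.
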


\begin{proof}
By \thm{WilkingPD}, the existence of a codimension $k$ totally geodesic submanifold implies that there exists $y \in H^k(M;\Q)$ such that the maps $H^i(M;\Q) \to H^{i+k}(M;\Q)$ given by multiplication by $y$ are injective for $k - 1 < i \leq n-2k + 1$ and surjective for $k-1 \leq i < n- 2k + 1$.

Next by \thm{thmP} with $c = k+4$, the symmetry rank assumption implies the existence of $x\in H^4(M;\Q)$ such that the maps $H^i(M;\Q) \to H^{i+4}(M;\Q)$ are surjective for $0\leq i \leq k$ and injective for $0 < i \leq k$. In particular, $y = ax^{\frac{k-m}{4}}$ for some $0 \leq m < 4$ and $a\in H^m(M;\Q)$. We claim that $x$ induces periodicity in $H^*(M;\Q)$.

We first show that it suffices to show injectivity of the multiplication maps $H^i(M;\Q) \to H^{i+4}(M;\Q)$ for $0<i\leq n-4$. Indeed, Poincar\'e duality would imply that the Betti numbers satisfy
	\[b_i \leq b_{i + 4} = b_{n - i - 4} \leq b_{n-i} = b_i\]
for all $0<i<n-4$, hence the injective maps $H^i(M;\Q) \to H^{i+4}(M;\Q)$ must be isomorphisms for dimensional reasons. Since the map $H^0(M;\Q)\to H^4(M;\Q)$ is already known to be surjective by our choice of $x$, the proof would be complete.

We now prove that multiplication maps $H^i(M;\Q)\to H^{i+4}(M;\Q)$ are injective for $0 < i \leq n-4$. Since $k\leq\frac{n+2}{4}$, the four cases that follow constitute a proof:

\begin{itemize}
	\item If $0 < i \leq k$, injectivity follows by our choice of $x$.
	\item If $k \leq i \leq n-2k+1$, injectivity follows since $x$ is a factor of $y$ and since $H^i(M;\Q) \stackrel{\cdot y}{\longrightarrow} H^{i+k}(M;\Q)$ is injective for these values of $i$.
	\item If $2k \leq i \leq n - k - 3$, injectivity follows from the fact that
		\[H^{i-k}(M;\Q) \stackrel{\cdot x}{\longrightarrow} H^{i-k+4}(M;\Q) \stackrel{\cdot y}{\longrightarrow} H^{i+4}(M;\Q)\]
is a composition of injective maps that equals the composition
		\[H^{i-k}(M;\Q) \stackrel{\cdot y}{\longrightarrow} H^{i}(M;\Q) \stackrel{\cdot x}{\longrightarrow} H^{i+4}(M;\Q),\]
	the first map of which is an isomorphism.
	\item If $n - k - 4 < i \leq n - 4$, injectivity follows from a direct argument. Let $z\in H^i(M;\Q)$ be a nonzero element. By Poincar\'e duality, there exists $w\in H^{n-i}$ such that $zw\neq 0$. The multiplicative property of $x$ implies that $w = xw'$ for some $w'\in H^{n-i-4}$. Hence
		\[0\neq zw = z(xw') = (zx)w',\]
which implies $xz\neq 0$, completing the proof.
\end{itemize}
\end{proof}

The final tool we require in the proofs involves an application of the theory of error-correcting codes. This idea was used for the first time in the subject in Wilking \cite{Wilking03} to produce fixed-point sets of isometries of torus actions that have small codimension. In the setting of positively curved manifolds, the inclusion maps of these totally geodesic fixed-point sets are highly connected by the connectedness lemma, hence one can construct inductive arguments over dimension as in \cite{Wilking03}. In addition, one obtains periodicity results as discussed in Section \ref{Preliminaries}.

We briefly summarize how this technique works. First, one requires a fixed point $x\in M^T$ of a torus action on $M$. By restricting attention to the subgroup of involutions in $T$, one associates to the isotropy represention a linear error-correcting code. One can then apply algebraic bounds from the theory of error-correcting codes to make conclusions about the Hamming weights of the code. Given this, one translates this algebraic information into geometric data about the fixed-point sets of involutions in the torus.

Here, inspired by a suggestion of Wilking, we consider an extension of this idea. The starting point is to simultaneously consider the isotropy representations at multiple fixed points $x_i \in M^T$. Specifically, one observes that the direct sum of isotropy representions, when restricted to the subgroup of involutions, yields another error-correcting code. Hence one can implement the strategy outlined above. The translation of algebraic information into geometric data is less immediate. However, if $M$ is positively curved, one can apply Frankel's theorem to prove that the chosen components of fixed-point sets of involutions have, in fact, a large number of components of $M^T$ contained in them.
The specific result we will use is the following.
\begin{prop}\label{pro:Griesmer}
Let $n\geq c\geq 0$ be even. Suppose a torus $T^r$ acts effectively and isometrically on a closed, connected, positively curved manifold $M^n$. If $x_1,\ldots,x_t\in M$ are fixed points of the $T^r$--action on $M$, and if
\[r \geq \frac{tc}{2} + \floor{\log_2(tn - tc + 2)},\] then there exists a nontrivial involution $\iota\in T^r$
and a component $N\subseteq M^\iota$ with $\cod(N)\leq\frac{n-c}{2}$ such that $N$ contains $\ceil{\frac{t+1}{2}}$ of the $x_i$.
\end{prop}

The $t=1$ statement is essentially Lemma 1.8 in \cite{Kennard2}, and it implies that we can cover each fixed-point component of $M^T$ by a component of a fixed-point set of an involution with dimension at least $n/2$. An argument using the fact $T^r$ has only $2^r$ involutions, together with Frankel's theorem, will be the key step in the proof of Theorem \ref{LogAssumption}.

The conclusion for general $t$ is really only used to prove Theorem \ref{LinearAssumptionDelta}, but for this it is crucial. The basic idea for the proof of the first conclusion of Theorem \ref{LinearAssumptionDelta} is that, by iterated use of this proposition, one can find involutions $\iota_1,\ldots,\iota_h\in T^r$ and components $N_i\subseteq M^{\iota_i}$ such that
	\begin{itemize}
	\item $h = \floor{\log_2(t+1)}$,
	\item $\cod(N_i)\leq\frac{n-c}{2}$ for all $i$, and
	\item $N_1\cup N_2\cup\cdots\cup N_h$ contains $x_i$ for all $1 \leq i \leq t$.
	\end{itemize}
A variation of this idea is used again in the proof of the second conclusion of Theorem \ref{LinearAssumptionDelta}.

\begin{proof}
Choose a basis for each $T_{x_i} M$ so that the isotropy representation $T^r \to \SO(T_{x_i} M)$ factors through $\U(n/2) \subseteq \SO(T_{x_i} M)$. It follows that the $\Z_2^r \subseteq T^r$ maps via the isotropy representation to a copy of $\Z_2^{n/2}\subseteq \SO(T_{x_i} M)$. Moreover, the map $\Z_2^r \to \Z_2^{n/2}$ has the property that the Hamming weight of the image of $\iota\in\Z_2^r$ is half of the codimension of $M^\iota_{x_i}$.

Consider the direct sum $T^r \to \SO\of{\bigoplus_{i=1}^t T_{x_i} M}$ of the isotropy representations at $x_1,\ldots,x_t$. Restricting to the involutions in $T^r$, we obtain a homomorphism
	\[\Z_2^r \to \bigoplus_{i=1}^t \Z_2^{n/2} \cong \Z_2^{tn/2}.\]
Let $\pi_i$ denote the projection of the direct sum onto the $i$--th summand of $\Z_2^{n/2}$. The Hamming weight of the image of $\iota\in\Z_2^r$ under the composition of the isotropy representation and $\pi_i$ is half the codimension $k_i$ of $M^\iota_{x_i}$. Set $k = \sum k_i$.

We claim that a nontrivial $\iota\in \Z^r$ exists with $k\leq\frac{t(n-c)}{2}$. Assuming this for a moment, we conclude the proof. Relabel the $x_i$ so that the codimensions $k_i = \cod(M^\iota_{x_i})$ are increasing in $i$. It follows that, for all $1 < j \leq \ceil{\frac{t+1}{2}}$,
	\[\pfrac{t}{2}(k_1 + k_j) \leq \sum_{i=1}^t k_i \leq \frac{t(n-c)}{2} \leq \frac{tn}{2}.\]
Using Frankel's theorem, we conclude that the components $M^\iota_{x_1}$ and $M^\iota_{x_j}$ intersect and hence coincide. Since we also have
	\[tk_1 \leq \sum_{i=1}^t k_i \leq \frac{t(n-c)}{2},\]
we conclude the component $M^\iota_{x_1}$ of $M^\iota$ satisfies $\cod(M^\iota_{x_1})\leq\frac{n-c}{2}$ and contains at least $\ceil{\frac{t+1}{2}}$ of the $x_i$. This concludes the proof given the existence of $\iota\in \Z^r$ as above.

We proceed by contradiction to show that some nontrivial $\iota\in \Z^r$ exists with $k\leq\frac{t(n-c)}{2}$. Suppose therefore that $k \geq \frac{t(n-c) + 2}{2}$ for all nontrivial $\iota\in\Z_2^r$. Applying Griesmer's bound, we obtain
	\[\frac{tn}{2} \geq \sum_{i=0}^{r-1} \ceil{\frac{t(n-c)+2}{2^{i+2}}}.\]
We estimate the sum on the right-hand side as follows:
	\[\frac{tn}{2} 	\geq \sum_{i=0}^{r-\frac{tc}{2}-1} \frac{tn-tc+2}{2^{i+2}} + \sum_{i=r-\frac{tc}{2}}^{r-1} 1
				= \frac{tn-tc+2}{2} - \frac{tn-tc+2}{2^{r - tc/2+1}} + \frac{tc}{2}.\]
Rearranging, we conclude that
	\[r \leq \frac{tc}{2} -1 + \floor{\log_2(tn-tc+2)}.\]
But this contradicts the assumed bound on $r$, so the result follows.
\end{proof}

\smallskip
\section{Proof of Theorem \ref{LogAssumption}}\label{ProofOfLogAssumption}
\bigskip

We first state Theorem \ref{LogAssumptionPLUS}, which is the most general version of Theorem \ref{LogAssumption} we can prove. It provides an upper bound on the Euler characteristic of the form $\chi(M) \leq f_0(n)$. We then deduce Theorem \ref{LogAssumption} from Theorem \ref{LogAssumptionPLUS} by proving the estimate $f_0(n) < 2^{3(\log_2 n)^2}$. The heart of this section is the proof of Theorem \ref{LogAssumptionPLUS}. At the end of the section, we comment on the quality of the approximation provided by Theorem \ref{LogAssumption} and illustrate the result graphically.

In order to state Theorem \ref{LogAssumptionPLUS}, we require two definitions. Set
	\[s(n) = \floor{\log_2 n} + \floor{\log_2(n+2)} - 2,\]
for $n>0$, set $f_0(n) = \frac{n}{2} + 1$ for $n\leq 52$, and define $f_0(n)$ for $n\geq 54$ by the recursive formula
		\[f_0(n) = \of{2^{s(n)} - 1} f_0\of{2\floor{\frac{3n-4}{8}}}.\]

\begin{theo}\label{LogAssumptionPLUS}
Let $M^n$ be a connected, closed, even-dimensional Riemannian manifold with positive sectional curvature. If a torus $T$ of dimension at least $\log_{4/3}(n)$ acts effectively and isometrically on $M$, then \[\chi(M) \leq \sum b_{2i}(M^T) \leq f_0(n).\]
\end{theo}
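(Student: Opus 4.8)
The plan is to run an induction on the dimension $n$, using the structure of fixed-point sets of involutions provided by Proposition \ref{lem:Griesmer} (the $t=1$ case) together with Frankel's theorem to control the number and complexity of components of $M^T$. The base case is handled by the definition: for $n \leq 52$ one has $f_0(n) = \frac{n}{2}+1$, and by Conner's theorem $\sum b_{2i}(M^T) \leq \sum b_{2i}(M)$, so one needs the bound $\sum b_{2i}(M) \leq \frac{n}{2}+1$ in low dimensions under the symmetry rank hypothesis $\dim T \geq \log_{4/3} n$; for small $n$ this should follow from the Grove--Searle and Wilking classifications (Theorems \ref{thm:GroveSearle}, \ref{thm:Wilking03}) once one checks that $\log_{4/3} n$ exceeds the relevant thresholds $\frac{n}{4}+1$ or $\frac{n+1}{2}$ in the appropriate ranges, or is dealt with directly as a separate lemma (this is presumably Lemma \ref{LogAssumptionLEMMA}).

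For the inductive step, assume $n \geq 54$. The key observation is that $T^r$ has only $2^r - 1$ nontrivial involutions, and that by Proposition \ref{lem:Griesmer} with $t=1$ every component $F$ of $M^T$ lies in some component $N \subseteq M^\iota$ of the fixed-point set of an involution $\iota \in T^r$ with $\cod(N) \leq n/2$ — so in particular $\dim N \geq n/2$ and $N$ itself is a closed, one-connected, positively curved manifold carrying an induced isometric $T/\langle\iota\rangle$-action of rank $r-1 \geq \log_{4/3}(n) - 1 \geq \log_{4/3}(n/2)$ when $n$ is large (here one uses $\log_{4/3}(n) - \log_{4/3}(n/2) = \log_{4/3} 2 > 1$, so the rank drops by at most the dimension scaling allows). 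Applying Frankel's theorem to pairs of such components inside a fixed $N^\iota$: two components of $M^T$ sitting inside the same $N$ have codimension in $N$ at most... and their sum of codimensions is at most $2\cdot\frac{n/2}{2}$... wait, one needs the Frankel inequality $\dim N_1 + \dim N_2 \geq \dim N$ forcing intersection. The correct bookkeeping: group the components of $M^T$ by which involution's fixed-point component contains them; there are at most $2^r - 1$ groups, bounded crudely by $s(n)$-type quantities, and within each group the number of components and their total even Betti number is controlled by the inductive bound $f_0(\dim N) \leq f_0(2\lfloor\frac{3n-4}{8}\rfloor)$ once one shows $\dim N \leq 2\lfloor\frac{3n-4}{8}\rfloor$, i.e. that the codimension of $N$ is at least roughly $n/4$ — which is exactly where the $4/3$ in $\log_{4/3}$ and the recursion $f_0(n) = (2^{s(n)}-1)f_0(2\lfloor\frac{3n-4}{8}\rfloor)$ come from.

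Summing over the at most $2^{s(n)} - 1$ involutions and applying the inductive hypothesis to each component $N$ (whose fixed set $N^{T}$ contains the relevant components of $M^T$, since $M^T = (M^\iota)^T \supseteq N^T$ on those pieces) gives
\[
\sum b_{2i}(M^T) \leq \of{2^{s(n)} - 1} \cdot f_0\of{2\floor{\tfrac{3n-4}{8}}} = f_0(n),
\]
with the first inequality $\chi(M) \leq \sum b_{2i}(M^T)$ coming from Conner's theorem, $\chi(M) = \chi(M^T) = \sum b_{2i}(M^T) - \sum b_{2i+1}(M^T) \leq \sum b_{2i}(M^T)$. The main obstacle I anticipate is twofold: first, making precise the dimension estimate $\dim N \leq 2\lfloor\frac{3n-4}{8}\rfloor$ — this requires that $\cod N \geq \lceil n/4 \rceil$ or so, which does \emph{not} follow from Proposition \ref{lem:Griesmer} alone (which only gives $\cod N \leq n/2$) and must instead be extracted by a more careful argument, perhaps splitting into the case where all components of $M^T$ already lie in a single $N$ of small codimension (handled via the periodicity/connectedness machinery, Theorem \ref{per} and Proposition \ref{pro:new4per}, to bound $\sum b_{2i}$ of $N$ directly) versus the case of genuinely many involutions; and second, verifying that the rank bound $\log_{4/3} n$ survives the passage to $N$ with its quotient torus action, which needs $\log_{4/3}(\dim N) \leq \dim T - 1$, i.e. essentially $\dim N \leq \frac{3}{4} n$ — again the appearance of $3/4$. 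Reconciling these two competing dimension constraints, and checking the recursion's arithmetic ($54$, the floors, $s(n)$) closes up correctly, is the technical heart of the argument.
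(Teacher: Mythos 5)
Your overall strategy coincides with the paper's: induction on dimension anchored at $n\leq 52$ via the Grove--Searle and Wilking classifications, Proposition \ref{lem:Griesmer} with $t=1$ to place each component of $M^T$ inside a fixed-point component $N\subseteq M^\iota$ of codimension at most $n/2$, Frankel's theorem to bound the number of distinct such $N$ by $2^{s(n)}-1$, and the rank bookkeeping $\log_{4/3}(n)-1=\log_{4/3}(3n/4)$ to pass the symmetry hypothesis down to $N$. You have also correctly located the missing step: Proposition \ref{lem:Griesmer} only gives $\cod(N)\leq n/2$, whereas the recursion needs $\dim N\leq 2\floor{(3n-4)/8}$, i.e.\ $\cod(N)>n/4$. (A minor slip: the base case is not Lemma \ref{LogAssumptionLEMMA}, which only estimates $f_0$; for $n<22$ the hypothesis contradicts the maximal symmetry rank, and for $22\leq n\leq 52$ one checks $\log_{4/3}n\geq n/4+1$ so Theorem \ref{thm:Wilking03} applies.)

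However, the dichotomy you propose to bridge the gap (``all components of $M^T$ lie in a single $N$ of small codimension'' versus ``genuinely many involutions'') is not the one that works, and you supply the argument for neither horn. The paper's dichotomy is pointwise, over a fixed $\Z_2^s\subseteq T$ with $s=s(n)$: either (Case 1) there is a fixed point $x$ at which every involution $\iota\in\Z_2^s$ with $\dk(M^\iota_x)\leq 1$ and $\cod(M^\iota_x)\leq n/2$ in fact has $\cod(M^\iota_x)\leq n/4$, or (Case 2) at every fixed point some involution has $\dk\leq 1$ and codimension strictly between $n/4$ and $n/2$. Case 2 is exactly what feeds your recursion. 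Case 1 does not reduce to bounding a single small-codimension submanifold; instead one shows that $H^*(M;\Q)$ itself is $4$--periodic, so all even Betti numbers are $1$ and $\sum b_{2i}(M^T)\leq\sum b_{2i}(M)\leq \frac{n}{2}+1\leq f_0(n)$ by Conner's theorem. That step needs two ingredients absent from your sketch: (i) the observation that an involution with $\dk(M^\iota_x)\geq 2$ exhibits $M^\iota_x$ as a transverse intersection $M^\tau_x\cap M^{\tau\iota}_x$ with $2\cod(M^\tau_x)+2\cod(M^{\tau\iota}_x)\leq n$, so the periodicity theorem applies directly --- this is why the kernel dimension $\dk$ must enter the case division at all; and (ii) a maximality argument, reapplying the Griesmer bound inside a complementary $\Z_2^{s-j}$, that produces a subgroup $\Z_2^j$ with $j=\floor{\log_2 n}-1$ all of whose nontrivial elements have codimension at most $n/4$ at $x$, followed by Lemma \ref{lem:Case1lemma}, which extracts from such a subgroup a transverse pair and hence $4$--periodicity. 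Without (i) and (ii) the induction cannot close, so the proposal has a genuine gap at its self-acknowledged ``technical heart.''
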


We briefly provide some explanation for the definition of $f_0(n)$. We suggest to see its definition as being motivated by two facts, the first providing the $2^{s(n)} - 1$ factor, and the second yielding the argument $2\floor{\frac{3n-4}{8}}\approx \frac{3}{4}n$. The latter corresponds to the base of $4/3$ of the logarithms we use.

More specifically, in the proof of Case 2 below, we will prove that $M^T$ is contained in the union of submanifolds $N_j$. It follows that $\sum b_{2i}(M^T)$ is bounded above by $\sum_j \sum_i b_{2i}(N_j^T)$. Now the $N_j$ correspond to unique, non-trivial involutions in a subgroup $\Z_2^{s(n)}$ of $T$, hence there are at most $2^{s(n)} - 1$ of them.

In addition, we show by an inductive argument that \linebreak[4]$\sum b_{2i}(N_j^T) \leq f_0(\dim N_j)$ and $\dim N_j < \frac{3}{4} n$ for all $N_j$. Putting this together, we deduce that $\sum b_{2i}(M^T) \leq f_0(n)$ by using the recursive formula for $f_0(n)$.

Finally note that, below dimension $54$, our symmetry assumption is sufficiently large that the classification results of Section \ref{Preliminaries} imply immediately that the upper bound of the Euler characteristic of $M^n$ is provided by $\chi(\cc\pp^{n/2}) = \frac n 2 + 1 = f_0(n)$.

\begin{proof}[Proof of Theorem \ref{LogAssumption}]
By Theorem \ref{LogAssumptionPLUS}, it suffices to show that $f_0(n) < 2^{3(\log_2 n)^2}$. This clearly holds for $n = 2$, so it suffices to prove it for $n \geq 4$. In this range, we actually prove the following, stronger estimate:
	\[f_0(n) \leq \of{\frac n 2 + 1}^{1 + \log_{4/3}\of{\frac n 2 + 1}}.\]

For $n \leq 52$, this is obvious since $f_0(n) = \frac n 2 + 1$. Proceeding inductively, for $n \geq 54$, we have
	\begin{eqnarray*}
	f_0(n)	
&=&\of{2^{s(n)} - 1} f_0\of{2\floor{\frac{3n-4}{8}}}\\
&\leq& \of{\frac{n(n+2)}{4} - 1} \of{\floor{\frac{3n-4}{8}} + 1}^{1 + \log_{4/3}\of{\floor{\frac{3n-4}{8}}+1}}\\
&<& \of{\frac{n}{2} + 1}^2 \of{\pfrac{3}{4} \of{\frac{n}{2}+1}}^{\log_{4/3}\of{\frac{n}{2} + 1}}\\
&=& \of{\frac{n}{2} + 1}^{1 + \log_{4/3}\of{\frac{n}{2} + 1}}.
	\end{eqnarray*}
\end{proof}

We proceed to the proof of Theorem \ref{LogAssumptionPLUS}. Recall that $M^n$ is a closed, positively curved manifold that admits an effective, isometric action by a torus $T$ with $\dim(T) \geq \log_{4/3} n$.

First, if $n < 22$, the bound on the dimension of $T$ implies that $\dim(T) > \frac{n}{2}$. As this contradicts the maximal possible symmetry rank (Theorem \ref{thm:GroveSearle}), the theorem is vacuously true in these dimensions. Moreover, the theorem continues to hold for dimensions $n\leq 52$ by Wilking's homotopy classification theorem (Theorem \ref{thm:Wilking}) and the definition of $f_0$. We proceed by induction to prove the result for $n\geq 54$.

Set $s = s(n)$, and choose any subgroup $\Z_2^s \subseteq T$ of involutions. We split the proof of the induction step into two cases.

\begin{description}
\item[Case 1] There exists $x \in M^T$ such that every $\iota\in\Z_2^s$ with $\dim\ker\of{T|_{M^\iota_x}} \leq 1$ and $\cod\of{M^\iota_x} \leq \frac{n}{2}$ actually has $\cod\of{M^\iota_x} \leq \frac{n}{4}$.
\item[Case 2] For all $x \in M^T$, there exists $\iota\in \Z_2^s$ with $\dim\ker\of{T|_{M^\iota_x}} \leq 1$ and $\frac{n}{4}<\cod\of{M^\iota_x} \leq \frac{n}{2}$.
\end{description}

If Case 1 occurs, we prove that $M$ has $4$--periodic rational cohomology. By Conner's theorem, we obtain the desired estimate
	\[\sum b_{2i}(M^T) \leq \sum b_{2i}(M) \leq \frac n 2 + 1 \leq f_0(n).\]

\begin{proof}[Proof in Case 1]
Fix some $x\in M^T$ as in the statement of Case 1. Let $0\leq j \leq \floor{\log_2 n} - 1$ be maximal such that there exists a $\Z_2^j \subseteq \Z_2^s$ with the property that every $\sigma\in\Z_2^j$ has $\dim\ker\of{T|_{M^\sigma_x}} \leq 1$ and $\cod\of{M^\sigma_x} \leq n/4$. We claim that $j = \floor{\log_2 n} - 1$. By Proposition \ref{pro:Case1lemma} and the comments above, this would suffice to show that $M$ is rationally $4$--periodic.

Suppose instead that $j \leq \floor{\log_2 n} - 2$. Choose a $\Z_2^{s-j}\subseteq \Z_2^{s}$ so that $\Z_2^j \cap \Z_2^{s-j} = \{\id\}$. Observe that
	\[s - j \geq s(n) - (\floor{\log_2 n} - 2) = \floor{\log_2(n+2)},\]
so Proposition \ref{pro:Griesmer} applies with $t=1$ and $c=0$ (alternatively, one can apply \cite[Lemma 1.8]{Kennard2}). Choose a nontrivial $\tau\in\Z_2^{s-j}$ with $\cod\of{M^\tau_x} \leq \frac{n}{2}$.

If $\dim\ker\of{T|_{M^\tau_x}} \geq 2$, then Proposition \ref{pro:dk2} implies that $H^*(M;\Q)$ is 4--periodic. Since this is our claim, we may assume that $\dim\ker\of{T|_{M^\tau_x}}\leq 1$ and hence, by the assumption in Case 1, that $\cod\of{M^\tau_x} \leq \frac{n}{4}$.

Fix for a moment any $\sigma\in\Z_2^j$. Observe that $M^{\sigma\tau}_x \supseteq M^\sigma_x\cap M^\tau_x$, so
	\[\cod\of{M^{\sigma\tau}_x} \leq \cod\of{M^\sigma_x \cap M^\tau_x} \leq \cod\of{M^\sigma_x} + \cod\of{M^\tau_x} \leq n/2.\]
By Proposition \ref{pro:dk2} again, we can conclude without loss of generality that $\dim\ker\of{T|_{M^{\sigma\tau}_x}} \leq 1$. But now the assumption in Case 1 implies that $\cod\of{M^{\sigma\tau}_x} \leq n/4$. Hence every $\iota$ in the $\Z_2^{j+1}$ generated by $\Z_2^j$ and $\tau$ satisfies $\dim\ker\of{T|_{M^\iota_x}} \leq 1$ and $\cod\of{M^\iota_x} \leq n/4$. This contradicts the maximality of $j$ and hence concludes the proof that $j \geq \floor{\log_2 n} - 1$.
\end{proof}

To conclude the proof of Theorem \ref{LogAssumptionPLUS}, it suffices to prove that $\sum b_{2i}(M^T) \leq f_0(n)$ in Case 2. In this case, we cannot calculate $H^*(M;\Q)$. Instead, we cover $M^T$ by at most $2^{s(n)}-1$ submanifolds, each of which satisfies the induction hypothesis. Adding together the estimates for each of these submanifolds and applying the recursive definition of $f_0$, we conclude the desired bound.

\begin{proof}[Proof in Case 2]
We use the notation $\beven(X) = \sum b_{2i}(X)$ for the sum of the even Betti numbers of a space $X$.

Suppose $M^T$ has $t$ components, and choose one point $x_i$ in each component. The assumption in Case 2 implies that there exist nontrivial involutions $\iota_1,\ldots,\iota_t\in \Z_2^s$ such that $\dim\ker\of{T|_{M^{\iota_j}_{x_j}}} \leq 1$ and $\frac{n}{2} \leq \dim\of{M^{\iota_j}_{x_j}} < \frac{3n}{4}$. Set $n_j = \dim(M^{\iota_j}_{x_j})$ for all $j$.

The $T$--action on $M$ restricts to a $T$ action on each $M^{\iota_j}_{x_j}$. After dividing by the kernel of the induced action, we obtain an effective action by a torus $T_j$ on $M^{\iota_j}_{x_j}$ with $\dim(T_j) \geq \dim(T) - 1$. Since $\dim(T) \geq \log_{4/3} n$ and $n > \pfrac{4}{3}n_j$, we see that $\dim\of{T_j} \geq \log_{4/3}(n_j)$. It follows from the induction hypothesis that $\beven\of{\of{M^{\iota_j}_{x_j}}^{T_j}} \leq f_0(n_j)$. But the components of $\of{M^{\iota_j}_{x_j}}^{T_j}$ are precisely the components of $M^T$ that also lie in $M^{\iota_j}_{x_j}$. Moreover, $n_j < \frac{3n}{4}$, so $n_j \leq 2\floor{\frac{3n-4}{8}}$, since both dimensions are even. Hence, we obtain
	\[\beven\of{\of{M^{\iota_j}_{x_j}}^T} = \beven\of{\of{M^{\iota_j}_{x_j}}^{T_j}} \leq f_0(n_j) \leq f_0\of{2\floor{\frac{3n-4}{8}}}.\]
We will use this estimate in a moment.

The key observation is that the set $\{\iota_1,\ldots,\iota_t\}\subseteq \Z_2^s\setminus\{\id\}$ has at most $2^s-1=2^{s(n)}-1$ elements. Moreover, since each $n_j \geq n/2$, we can conclude from Frankel's theorem that the set $\{M^{\iota_1}_{x_1},\ldots,M^{\iota_t}_{x_t}\}$ also has at most $2^{s(n)} - 1$ elements. Let $J\subseteq\{1,\ldots,t\}$ denote an index set in one-to-one correspondence with the set of $M^{\iota_j}_{x_j}$. Since every component $F \subseteq M^T$ is contained in some $\of{M^{\iota_j}_{x_j}}^T$ with $j\in J$, we have the estimate
	\[\beven(M^T)
		 = \sum_{F \subseteq M^T} \beven(F)
 		\leq \sum_{j\in J}\beven\of{\of{M^{\iota_j}_{x_j}}^T}.\]
Applying the estimate above and the recursive definition of $f_0$, we see that
	\[\beven(M^T) \leq \of{2^{s(n)} - 1} f_0\of{2\floor{\frac{3n-4}{8}}} = f_0(n).\]
\end{proof}

This finishes the proof of Theorem \ref{LogAssumptionPLUS} and hence of Theorem \ref{LogAssumption}.
\begin{rem}
We remark that our Case 1, Case 2 approach directly yields vanishing theorems for the elliptic genus as proven in \cite{Weisskopf}.
\end{rem}

We conclude this section with a remark on the size of $f_0(n)$.
\begin{rem}

Make the following definitions.
	\begin{itemize}
	\item Set $n_0=0, n_1 = 54$ and, for $i\geq 2$, let $n_i$ be the minimal integer satisfying $n_{i-1} \leq 2\floor{\frac{3n_i-4}{8}}$.
	\item Set $\kappa_0 = 1$ and, for $i\geq 0$, set
$\kappa_i = f_0(n_i)\left/\of{\frac{n_i}{2}+1}^{1 + \log_{4/3}\of{\frac{n_i}{2} + 1}}\right.$.
	\end{itemize}
See Table \ref{table01} for some approximate values of $n_i$ and $\kappa_i$.

\begin{table}[h]
\centering \caption{Some approximated values} \label{table01}
\begin{tabular}{l @{\hspace{8mm}}| c@{\hspace{8mm}} |@{\hspace{8mm}}l@{\hspace{8mm}}}
 $i=$ &$n_i=$ & $\kappa_i=$ \\
\hline
&&\\$0$ & $0$ & $1$ \\
$1$ & $54$ & $3.14823\cdot 10^{-15}$ \\
 $2$ & $74$ & $1.45259\cdot 10^{-15}$ \\
 $3$ & $100$ & $4.80780\cdot 10^{-16}$ \\
 $4$ & $135$ & $3.40869\cdot 10^{-16}$ \\
 $5$ & $183$& $1.10871\cdot 10^{-16}$ \\
 $6$ & $247$ & $2.15684\cdot 10^{-17}$ \\
\end{tabular}
\end{table}

By an argument similar to the one above that shows $f_0(n) < 2^{3(\log_2 n)^2}$, one could similarly show the following, better estimates for $f_0(n)$: For all $i\geq 1$ and $n\geq n_i$,
	\[f_0(n) \leq \kappa_i \of{\frac{n}{2} + 1}^{1 + \log_{4/3}\of{\frac{n}{2} + 1}}.\]

We will not pursue this here. However, we compare the function $f_0$ with the approximation given in Theorem \ref{LogAssumption} against the background of an exponential bound. For this note that it is easily possible to modify the arguments we provide in order to obtain the exponential function we draw in the pictures below as an alternative upper bound. From the pictures already it should be obvious why we did not spend more effort on this.

Already from the graphs in Figures \ref{fig1} and  \ref{fig2}, one sees that the functions grow strictly subexponentially. As for the asymptotic behavior, the exponential reference function may easily be chosen such that the power of two is a genuine fraction of $n$.
\begin{figure}[ht]
	\centering
  \includegraphics[width=0.47\textwidth]{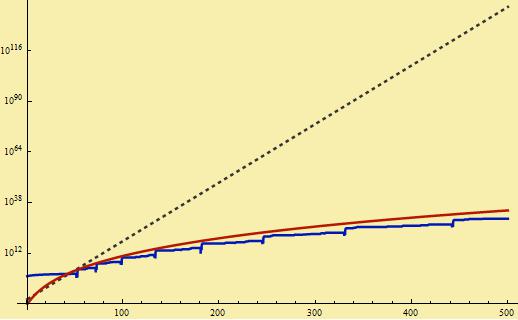}
  \includegraphics[width=0.47\textwidth]{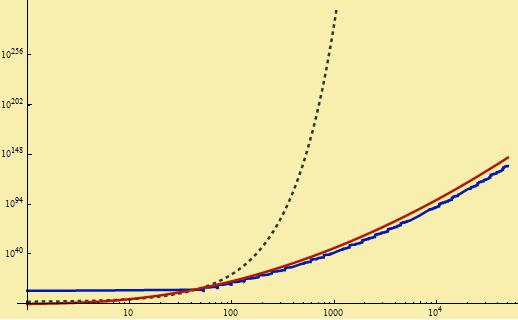}
	\caption{Comparison of the functions $f_0(n)$ (blue) and $\kappa_1\cdot(\frac{n}{2} + 1)^{1 + \log_{4/3}\of{\frac{n}{2} + 1}}$ (red)
under a logarithmically scaled $y$-axis respectively both axes being logarithmically scaled. As a reference the function $1.13576\cdot 10^{-12}\cdot 2^n$, which also bounds $f_0$, (black) is added.}
	\label{fig1}
\end{figure}
\begin{figure}[ht]
	\centering
  \includegraphics[width=0.47\textwidth]{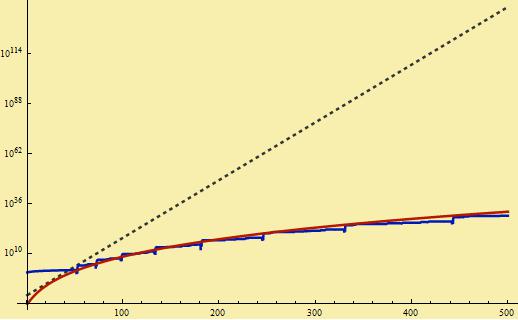}
  \includegraphics[width=0.47\textwidth]{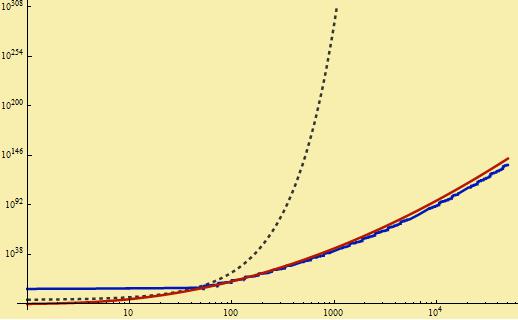}
	\caption{Comparison of the functions $f_0(n)$ (blue) and $\kappa_6\cdot(\frac{n}{2} + 1)^{1 + \log_{4/3}\of{\frac{n}{2}+1}}$ (red)
under a logarithmically scaled $y$-axis respectively both axes being logarithmically scaled. As a reference the function $1.13576\cdot 10^{-12}\cdot 2^n$, which also bounds $f_0$, (black) is added.}
	\label{fig2}
\end{figure}
\end{rem}

\bigskip
\section{Proof of Theorem B}\label{ProofOfLinearAssumptionDelta}
\bigskip

For integers $\alpha \geq 3$ and $n>0$, define
	\[s_\alpha(n) = \frac{n}{2\alpha} + 2\log_2\of{\frac{n}{2\alpha}} + \alpha + 3.\]
Theorem \ref{LinearAssumptionDelta} will follow from the following theorem.
\begin{theo}\label{LinearAssumptionAlpha}
Let $\alpha\geq 3$ be an integer, and let $M^n$ be a closed, simply connected Riemannian manifold with positive sectional curvature. Assume $T$ is a torus acting isometrically and effectively on $M$ with $\dim(T)  \geq s_\alpha(n)$. The following hold:
	\begin{enumerate}
	\item $b_0(M^T) \leq a_\alpha n + 1$, where $a_\alpha = 3\cdot 2^{\alpha - 4}/\alpha$.
	\item $\displaystyle \chi(M)\leq\sum b_{2i}(M^T) \leq \of{\frac{n}{2} + 1} \of{1 + \log_{b_\alpha}\of{\frac{n}{2} + 1}}$ when $\alpha \geq 4$, where $b_\alpha = \frac{2^{\alpha-3}}{2^{\alpha-3}-1}$. When $\alpha = 3$, the upper bound is $\frac n 2 + 1$.
	\end{enumerate}
\end{theo}

We show now that this theorem implies Theorem \ref{LinearAssumptionDelta}.
\begin{proof}[Proof of Theorem \ref{LinearAssumptionDelta}]
Let $\delta > 0$, and assume $M^n$ is a closed, positively curved Riemannian manifold with symmetry rank $r > \delta n$.

Choose an integer $\alpha \geq 3$ with $\delta > \frac{1}{2\alpha}$, and choose another integer $n_0$ such that $n\geq n_0$ implies $\delta n > s_\alpha(n)$. Finally, choose a constant $c$ such that all of the following hold:
	\begin{itemize}
	\item For all $m\leq n_0$, every non-negatively curved $m$--manifold $N$ has $\sum b_i(N) \leq c$.
	\item $a_\alpha n + 1 \leq c n$ for all $n \geq n_0$.
	\item $2\log_{b_\alpha}(2) \leq c$.
	\end{itemize}
Observe that the first is possible by Gromov's Betti number estimate.

It now follows from either Gromov's theorem (if $n \leq n_0$) or Theorem \ref{LinearAssumptionAlpha} (if $n > n_0$) that $b_0(M^T) \leq c n$ and $\chi(M) < c n \log_2 n$.
\end{proof}

We spend the rest of this section proving Theorem \ref{LinearAssumptionAlpha}. Our task is threefold. First we prove Lemma \ref{LinearAssumptionBaseCase}, which implies the $\alpha = 3$ case of Theorem \ref{LinearAssumptionAlpha}. Using this as the base case for an induction over $\alpha$, we prove the two parts of Theorem \ref{LinearAssumptionAlpha} in Theorems \ref{LinearAssumptionPart1} and \ref{LinearAssumptionPart2}, respectively.

\begin{lemma}\label{LinearAssumptionBaseCase}
Assume $M^n$ is a closed, one-connected Riemannian manifold with positive sectional curvature. If a torus $T$ acts effectively by isometries on $M$ with
	\[\dim(T) \geq \frac{n}{6} + 2\log_2(n) - 3,\]
then
	\begin{enumerate}
	\item $M$ is homotopy equivalent to $\s^n$ or $\cc\pp^{n/2}$,
	\item $M$ has the integral cohomology of $\hh\pp^{n/4}$,
	\item $n\equiv 2\bmod{4}$ and, for all fields $\mathbb{F}$, the cohomology of $M$ with coefficients in $\mathbb{F}$ is isomorphic to either $H^*(\cc\pp^{n/2};\mathbb{F})$ or $H^*(\s^2\times\hh\pp^{\frac{n-2}{4}};\mathbb{F})$, or
	\item $n\equiv 3\bmod{4}$ and, for all fields $\mathbb{F}$, the cohomology of $M$ with coefficients in $\mathbb{F}$ is isomorphic to either $H^*(\s^n;\mathbb{F})$ or $H^*(\s^3\times\hh\pp^{\frac{n-3}{4}})$.
	\end{enumerate}
In particular, if $\dim(T) \geq s_3(n)$, then $M$ has $4$--periodic cohomology and $\sum b_{2i}(M^T) \leq \sum b_{2i}(M) \leq \frac{n}{2} + 1$.
\end{lemma}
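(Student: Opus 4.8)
First I would split into the ranges $n \geq 6000$ and $n < 6000$. For $n \geq 6000$ there is nothing to do beyond a numerical check: since $2\log_2 n - 4 \geq 0$ for all $n \geq 4$, the hypothesis $\dim(T) \geq \frac n6 + 2\log_2 n - 3$ implies the Wilking bound $\dim(T) \geq \frac n6 + 1$, so the cohomology classification in Theorem \ref{thm:Wilking03} applies verbatim and gives one of the conclusions (1)--(4). Hence the real content of the lemma lies in the range $n < 6000$, and the goal is to extend Wilking's cohomology classification down to all dimensions at the cost of the strengthened symmetry hypothesis. This step is freestanding: it does not itself use the induction over $\alpha$, which is why it serves as the common base case for Lemmas \ref{lem:LinearAssumptionPart1} and \ref{lem:LinearAssumptionPart2}.

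For $n < 6000$ the plan is to re-run Wilking's original proof of the cohomology classification and to isolate the finitely many places where the restriction $n \geq 6000$ is actually invoked. That proof proceeds by induction on $\dim M$, building from the isotropy representations at fixed points, via involutions and a Griesmer-bound argument (cf.\ the $t = 1$ case of Lemma \ref{lem:Griesmer}), a totally geodesic submanifold of controlled codimension on which a subtorus of $T$ of only slightly smaller rank still acts, and then applying the connectedness lemma (Theorem \ref{ConnectednessTheorem}) together with the periodicity statements (Theorems \ref{WilkingPD} and \ref{per}). None of these tools requires $n \geq 6000$; that hypothesis enters only where one must weigh a term linear in $n$ against an error term of order $\log_2 n$ coming from counting the $2^r$ involutions of $T^r$. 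Replacing the rank bound $\frac n6 + 1$ by $\frac n6 + 2\log_2\pfrac n4 + 1$ supplies exactly the $\Theta(\log n)$ of slack those inequalities need; in particular the corresponding rank bound is preserved when one passes to a totally geodesic submanifold of smaller even dimension, so the inductive step goes through for every $n$. The induction bottoms out in a bounded list of small dimensions, which are covered by the Grove--Searle theorem (Theorem \ref{thm:GroveSearle}), by the Fang--Rong classification in dimensions $8$ and $9$, and by the homotopy classification part of Theorem \ref{thm:Wilking03}, which applies once $\dim(T) \geq \frac n4 + 1$ --- and our hypothesis forces this for a range of small $n$. In the one remaining case, where the homotopy classification would allow $M \simeq \mathrm{Ca}\pp^2$, one observes that the isometry group of $\mathrm{Ca}\pp^2$ has rank $4$, which is incompatible with the symmetry rank forced by the hypothesis, so that case does not occur.

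Given conclusions (1)--(4), the final assertion is immediate: the rational cohomology rings that appear --- those of $\s^n$, $\cc\pp^{n/2}$, $\hh\pp^{n/4}$, $\s^2 \times \hh\pp^{(n-2)/4}$, and $\s^3 \times \hh\pp^{(n-3)/4}$ --- are all $4$--periodic, since spheres, projective spaces, and products of the form $\s^2 \times \hh\pp^m$ and $\s^3 \times \hh\pp^m$ have periodic cohomology, as recorded after the definition of periodicity.

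The main obstacle is the middle step: Wilking's argument is long, and one must track every invocation of $n \geq 6000$ and verify, case by case, that the extra $2\log_2\pfrac n4$ symmetry rank suffices, while also checking that the strengthened hypothesis introduces no new small-dimensional exception beyond those already handled by Grove--Searle, Fang--Rong, and the homotopy classification. I expect that, apart from this bookkeeping, the only genuinely delicate point is confirming that the finitely many dimensions lying between the reach of the homotopy classification and $n = 6000$ are all accommodated --- that is, that the logarithmic slack really is uniform in $n$ across that range.
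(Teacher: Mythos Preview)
Your overall strategy is the paper's: handle small $n$ via Grove--Searle and Wilking's homotopy classification, and for larger $n$ feed the extra logarithmic symmetry into Wilking's argument. But the paper is considerably sharper about how this is executed, and two of your side remarks are off.

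First, the paper does not re-run Wilking's whole proof. It isolates two numbered results from \cite{Wilking03}: Proposition~8.1, which already gives conclusions (1)--(4) for \emph{all} $n$ once there is an involution $\iota\in T$ with $\cod(M^\iota)\le 7n/24$ (and $\dim T\ge\max(n/6+1,\,n/8+14)$), and the proof of Corollary~3.2.a, which produces such an involution whenever the rank satisfies an explicit inequality $r\ge 0.325973\,m+\tfrac{3}{2}\log_2 m+2.6074$ with $m=\lfloor n/2\rfloor$. So the ``middle step'' you flag as the obstacle is a single numerical check, not a reconstruction of the induction; your worry about the rank bound being ``preserved when one passes to a submanifold'' never arises. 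The natural break is at $n=120$, not $n=6000$: for $17\le n\le 120$ the hypothesis already forces $\dim T\ge n/4+1$, so the homotopy classification applies directly. For $n>120$ the numerical inequality above holds under the lemma's hypothesis except at $n=126$ and $n=131$, where one falls back on the Griesmer bound. You did not anticipate such exceptional dimensions; they are exactly the kind of thing your last paragraph warns about.

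Second, your $\mathrm{Ca}\pp^2$ argument is wrong, though harmlessly so. Saying ``the isometry group of $\mathrm{Ca}\pp^2$ has rank $4$'' speaks only about the symmetric metric; the hypothesis concerns an arbitrary positively curved metric on a homotopy $\mathrm{Ca}\pp^2$, so that rank bound is irrelevant. The reason $\mathrm{Ca}\pp^2$ never appears is simply that it has dimension $16$, and for $n=16$ the hypothesis gives $\dim T\ge 8=n/2$, placing you in the Grove--Searle equality case.
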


Note that the conclusion is the same as in Wilking's cohomology classification (Theorem \ref{thm:Wilking}) for $n \geq 6000$. Using Wilking's proof, we obtain the same conclusion in all dimensions using the slightly larger symmetry rank bound.

\begin{proof}[Proof of Lemma \ref{LinearAssumptionBaseCase}]
We induct over dimension. For $n\leq 3$, the theorem is trivial. For $4\leq n \leq 16$, $\dim(T) \geq n/2$, hence the result follows from the Grove and Searle's diffeomorphism classification (Theorem \ref{thm:GroveSearle}). For $17\leq n\leq 120$, the symmetry rank is at least $\dim(T) \geq \frac{n}{4} + 1$, so the theorem follows from Wilking's homotopy classification (Theorem \ref{thm:Wilking}).

Suppose now that $n > 120$. The assumption on $T$ implies
	\[\dim(T) \geq \max\left(\frac{n}{6} + 1, \frac{n}{8} + 14\right).\]
By Proposition 8.1 in \cite{Wilking03}, if there exists an involution $\iota\in T$ with $\cod(M^\iota) \leq \frac{7n}{24}$, then the conclusion of the lemma holds. It therefore suffices to prove the existence of such an involution.

To see that such an involution exists, we adapt the proof of Corollary 3.2.a in \cite{Wilking03}. Let $T$ denote the torus acting effectively and isometrically on $M$. By Berger's theorem, there exists $T^r\subseteq T$ such that $T^r$ has a fixed point $x\in M$, where $r = \dim(T) - \delta$ and where $\delta = 0$ if $n$ is even and $\delta = 1$ if $n$ is odd. We consider the isotropy representation $T^{r} \to \SO(T_x M) = \SO(n)$ at $x$. The dimension of the $(-1)$--eigenspace of the image of an involution $\iota\in T^{r}$ in $\SO(n)$ is equal to the codimension of $M^\iota_x$. The proof of Corollary 3.2.a in \cite{Wilking03} states that such an nontrivial involution $\iota\in T^r$ exists with $\cod(M^\iota_x) \leq 7n/24$ if
	\[r \geq 0.325973 m + \frac{3}{2} \log_2(m) + 2.6074\]
where $m = \floor{n/2}$. The original proof concludes that this is the case for $r\geq\frac{m}{3} + 1$ and $m\geq 2700$, and in particular for $r\geq \frac{n}{6} + 1$ and $n\geq 6000$. In our case, the assumption on $r$ in terms of $m$ is that
	\[r = \dim(T) -\delta \geq \frac{m}{3} + 2\log_2(m) - 5 - \delta.\]
Except for $n = 126$ and $n=131$, this estimate holds for all $n > 120$. For $n = 126$ and $n = 131$, one can instead apply the Griesmer bound to find the desired involution. In all cases therefore, if $n > 120$, the involution $\iota$ with $\cod(M^\iota_x) \leq 7n/24$ exists. This concludes the proof of the lemma.
\end{proof}

With the base case ($\alpha = 3$) complete, we pause for a moment to prove a couple of lemmas that will facilitate the induction over $\alpha$. The third estimate of Lemma \ref{salphaEstimates} together with Lemma \ref{LinearAssumptionLemma2} already suggest how this will work.

\begin{lemma}\label{salphaEstimates}
For $\alpha \geq 4$, $s_\alpha(n)$ satisfies the following:
	\begin{enumerate}
	\item If $n \leq \alpha(\alpha-1)/3$, then $s_\alpha(n) \geq s_{\alpha-1}(n)$.
	\item If $n > \alpha(\alpha-1)/3$, then $s_\alpha(n) \geq \log_2(a_\alpha n^2 + 2n + 2)$.
	\item If $k\geq n/\alpha$, then $s_\alpha(n) - 1 \geq s_{\alpha-1}(n-k)$.
	\end{enumerate}
\end{lemma}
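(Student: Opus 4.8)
The plan is to prove all three parts by direct computation from the closed form
\[s_\alpha(n) = \frac{n}{2\alpha} + 2\log_2\of{\frac{n}{2\alpha}} + \alpha + 3,\]
using in each case only that $\alpha \geq 4$ together with the hypothesis on $n$ or $k$ that is in force. I would start with part (3), since it is essentially an identity. From $k \geq n/\alpha$ we get $n - k \leq \tfrac{(\alpha-1)n}{\alpha}$, and since $m \mapsto s_{\alpha-1}(m)$ has derivative $\tfrac{1}{2(\alpha-1)} + \tfrac{2}{m\ln 2} > 0$ it is increasing on $(0,\infty)$; thus it suffices to estimate $s_{\alpha-1}$ at $m = \tfrac{(\alpha-1)n}{\alpha}$. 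Substituting this value makes the argument of the linear and logarithmic terms collapse to $\tfrac{n}{2\alpha}$, giving
\[s_{\alpha-1}\of{\tfrac{(\alpha-1)n}{\alpha}} = \frac{n}{2\alpha} + 2\log_2\of{\frac{n}{2\alpha}} + (\alpha-1) + 3 = s_\alpha(n) - 1,\]
which is exactly part (3).

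For part (1) I would subtract the two expressions, obtaining
\[s_\alpha(n) - s_{\alpha-1}(n) = -\frac{n}{2\alpha(\alpha-1)} + 2\log_2\of{1 - \tfrac{1}{\alpha}} + 1.\]
This is decreasing in $n$, so under the hypothesis $n \leq \alpha(\alpha-1)/3$ it is at least $\tfrac{5}{6} + 2\log_2\of{1 - \tfrac{1}{\alpha}}$. Since $1 - \tfrac1\alpha \geq \tfrac34$ for $\alpha \geq 4$ and $2\log_2\of{1 - \tfrac1\alpha}$ is increasing in $\alpha$, the worst case is $\alpha = 4$, where the quantity equals $\tfrac56 + 2\log_2\tfrac34 = \tfrac56 + 2\log_2 3 - 4 > 0$. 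Hence part (1) follows, though with very little room at $\alpha = 4$.

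Part (2) is where the real work lies. I would write $\log_2(a_\alpha n^2 + 2n + 2) = \log_2 a_\alpha + 2\log_2 n + \log_2\of{1 + \tfrac{2n+2}{a_\alpha n^2}}$, substitute $\log_2 a_\alpha = \log_2 3 + (\alpha - 4) - \log_2\alpha$ and the formula for $s_\alpha(n)$, and collect terms to arrive at
\[s_\alpha(n) - \log_2(a_\alpha n^2 + 2n + 2) = \frac{n}{2\alpha} + 5 - \log_2(3\alpha) - \log_2\of{1 + \frac{2n+2}{a_\alpha n^2}}.\]
Then, since $n > \alpha(\alpha-1)/3 \geq 4$ forces $n \geq 5$ and since $a_\alpha \geq a_4 = \tfrac34$, one has $\tfrac{2n+2}{a_\alpha n^2} \leq \tfrac{8(n+1)}{3n^2} \leq 1$ for $n \geq 5$, so the last logarithm is at most $1$ and it remains only to verify $\tfrac{n}{2\alpha} \geq \log_2(3\alpha) - 4$. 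Using $\tfrac{n}{2\alpha} > \tfrac{\alpha-1}{6}$, this reduces to $\tfrac{\alpha + 23}{6} \geq \log_2(3\alpha)$, which holds for every integer $\alpha \geq 4$: the difference of the two sides is minimized near $\alpha \approx 9$, where it is still positive, and grows linearly afterward. The main obstacle, then, is purely the bookkeeping in part (2) — in particular choosing the reductions so that the error term $\log_2\of{1 + \tfrac{2n+2}{a_\alpha n^2}}$ is comfortably absorbed — while parts (1) and (3) are immediate once the algebra is written out, with (1) being the only place where the constants are genuinely tight.
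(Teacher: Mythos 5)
Your proof is correct, and parts (1) and (3) are essentially identical to the paper's argument (same subtraction and bound $2\log_2\frac{\alpha}{\alpha-1}<\frac{5}{6}$ for (1), same substitution $n-k\leq\frac{\alpha-1}{\alpha}n$ for (3)). For part (2) the paper exponentiates and shows $a_\alpha n^2+2n+2<\frac{2^{\alpha-1}n^2}{\alpha}<2^{s_\alpha(n)}$ via a $\frac{3}{8}+\frac{5}{8}$ split, whereas you stay with logarithms and absorb the term $\log_2\bigl(1+\frac{2n+2}{a_\alpha n^2}\bigr)\leq 1$ into the additive slack — the same direct verification with different bookkeeping, and both work.
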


\begin{proof}
The estimate $s_\alpha(n) \geq s_{\alpha - 1}(n)$ is equivalent to
	\[1 \geq \frac{n}{2\alpha(\alpha - 1)} + 2\log_2\pfrac{\alpha}{\alpha - 1}.\]
The logarithm term is decreasing in $\alpha$, so it is at most $2\log_2\pfrac{4}{3} < \frac{5}{6}$. In particular, $s_\alpha(n) \geq s_{\alpha-1}(n)$ holds if $1 \geq \frac{n}{2\alpha(\alpha-1)} + \frac{5}{6}$, which is to say if $n \leq \alpha(\alpha-1)/3$.

For the second estimate, suppose that $n > \alpha(\alpha-1)/3$. Observe that $\alpha \geq 4$ implies
	\[2^{\frac{n}{2\alpha} + 2} > 2^{\frac{\alpha - 1}{6}+2} > \alpha,\]
hence
	\[2^{s_\alpha(n)}
	= 2^{\frac{n}{2\alpha}+\alpha+3}\pfrac{n}{2\alpha}^2
	> \alpha \cdot 2^{\alpha+1}\pfrac{n}{2\alpha}^2
	= \frac{2^{\alpha-1} n^2}{\alpha}.\]
On the other hand, $\alpha\geq 4$ and $n > \alpha(\alpha-1)/3$ imply that
	\[2n + 2
	<   \pfrac{\alpha(\alpha-1)}{3} n
	  + \frac{1}{4}\pfrac{\alpha(\alpha-1)}{3}^2
	< \frac{5}{4} n^2 \leq \frac{5}{8}\pfrac{2^{\alpha-1}n^2}{\alpha}.\]
Combining this estimate with the one above for $s_\alpha(n)$, we conclude the proof of the second estimate as follows:
	\[a_\alpha n^2 + 2n + 2
	= \frac{3}{8}\pfrac{2^{\alpha-1}n^2}{\alpha} + (2n+2)
	< \frac{2^{\alpha-1}n^2}{\alpha} < 2^{s_\alpha(n)}.\]

Finally, if $k \geq n/\alpha$, then $n - k\leq \frac{\alpha-1}{\alpha} n$. Hence we conclude the third estimate as follows:
	\begin{eqnarray*}
	s_{\alpha - 1}(n-k)
	& = & \frac{n-k}{2(\alpha-1)} + 2\log_2\pfrac{n-k}{2(\alpha-1)}
			+ (\alpha - 1) + 3\\
	&\leq& \frac{n}{2\alpha} + 2\log_2\pfrac{n}{2\alpha} + \alpha+3-1\\
	&=& s_\alpha(n) - 1.
	\end{eqnarray*}
\end{proof}

\begin{lemma}\label{LinearAssumptionLemma2}
Assume $M^n$ is a closed, one-connected Riemannian manifold with positive sectional curvature, and assume $T$ is a torus acting effectively and isometrically on $M$ with $\dim(T) \geq s_\alpha(n)$ and $\alpha\geq 4$. If there exists a nontrivial involution $\iota\in T$ and a point $x\in M^\iota$ such that $\cod(M^\iota_x) \leq \frac{n}{\alpha}$ or such that $\cod(M^\iota_x) \leq \frac{n}{2}$ and $\dim\ker\of{T|_{M^\iota_x}} \geq 2$, then $M$ has $4$--periodic rational cohomology.
\end{lemma}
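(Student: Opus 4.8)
The plan is to treat the two alternatives in the hypothesis by two different periodicity mechanisms already at our disposal. In both cases set $N=M^\iota_x$ and $k=\cod(N)$, and recall that $N$ is a connected, closed, embedded, totally geodesic submanifold of $M$ with $k>0$ (a nontrivial $\iota$ cannot act trivially on the connected manifold $M$).

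First suppose $\cod(M^\iota_x)\le n/\alpha$. Then $N$ is a totally geodesic submanifold of codimension $k\le n/\alpha\le n/4<(n+2)/4$, so the codimension hypothesis of \PROP{pro:new4per} holds. It remains to check its symmetry-rank hypothesis, namely $\dim(T)\ge 2\log_2 n+\tfrac{k}{2}+1$. As $\tfrac{k}{2}\le\tfrac{n}{2\alpha}$ and $\dim(T)\ge s_\alpha(n)$, it is enough to show $s_\alpha(n)\ge 2\log_2 n+\tfrac{n}{2\alpha}+1$; cancelling $\tfrac{n}{2\alpha}$ and writing $2\log_2\of{\tfrac{n}{2\alpha}}=2\log_2 n-2\log_2(2\alpha)$, this reduces to $\alpha\ge 2\log_2\alpha$, which holds for every $\alpha\ge 4$ (with equality at $\alpha=4$, and a widening margin afterwards). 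Hence \PROP{pro:new4per} applies and $H^*(M;\Q)$ is $4$-periodic.

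Now suppose instead that $\cod(M^\iota_x)\le n/2$ and $\dk(M^\iota_x)\ge 2$. I would run the transverse-intersection argument used in the proof of \thm{LogAssumptionPLUS}. Since the kernel of the induced $T$-action on $N$ has dimension at least two and contains $\iota$ (which fixes $N$ pointwise), it contains an involution $\sigma\neq\id,\iota$, and $\sigma$ too fixes $N$ pointwise, so $N\subseteq M^\sigma_x$. Both inclusions in $N\subseteq M^\sigma_x\subseteq M$ are strict: the second by effectiveness, the first because $N=M^\sigma_x$ would make $d\iota_x$ and $d\sigma_x$ share the $(+1)$-eigenspace $T_xN$, hence be equal, forcing $\iota=\sigma$. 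Analysing the $\langle\iota,\sigma\rangle\cong\Z_2^2$-action on the normal space of $N$ at $x$ — on which $\iota$ acts as $-\mathrm{Id}$ — shows that $N$ is the transverse intersection of the proper totally geodesic submanifolds $M^\sigma_x$ and $M^{\iota\sigma}_x$, so $2\cod(M^\sigma_x)+2\cod(M^{\iota\sigma}_x)=2\cod(N)\le n$, and the periodicity theorem \thm{per} yields that $H^*(M;\Q)$ is $4$-periodic.

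The genuine content lies elsewhere — in \PROP{pro:new4per}, in the periodicity theorem, and in the choice of the constants in $s_\alpha(n)$ that makes the numerology of the first case come out exactly right. The one place that needs care is the second case: one must verify that both $M^\sigma_x$ and $M^{\iota\sigma}_x$ are \emph{proper} submanifolds (using that $\iota$, $\sigma$ and $\iota\sigma$ all act nontrivially) and that their intersection is genuinely \emph{transverse} along $N$ (using $\iota=-\mathrm{Id}$ on the normal bundle of $N$). As this repeats verbatim a step already carried out in the proof of \thm{LogAssumptionPLUS}, I would simply refer back to it.
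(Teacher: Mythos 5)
Your proposal is correct and follows the paper's own argument essentially verbatim: the first alternative is handled by Proposition \ref{pro:new4per} after the same numerical check (which the paper phrases as $\alpha+3-2\log_2(2\alpha)\geq 1$, equivalent to your $\alpha\geq 2\log_2\alpha$), and the second by exhibiting $M^\iota_x$ as the transverse intersection of $M^\sigma_x$ and $M^{\iota\sigma}_x$ for an involution $\sigma\neq\id,\iota$ in the $2$--dimensional kernel and invoking the periodicity theorem. Your added care about properness and transversality is consistent with the paper's brief remark that effectivity forces both codimensions to be positive.
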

Here, we are using the notation from Section \ref{ProofOfLogAssumption} that $\dim\ker\of{T|_{M^\iota_x}}$ denotes the dimension of the kernel of the induced $T$--action on $M^\iota_x$.
\begin{proof}
Choose $\iota$ and $x$ as in the assumption of the lemma. If $k=\cod(M^\iota_x)$ is at most $n/\alpha$, then Proposition \ref{pro:new4per} implies that $M$ has $4$--periodic rational cohomology. Indeed, the proposition applies since $\alpha \geq 4$ implies that $\alpha + 3 - 2\log_2(2\alpha) \geq 1$ and hence that
	\[\dim(T) \geq s_\alpha(n) \geq \frac{k}{2} + 2\log_2(n) + 1.\]

On the other hand, if $k \leq n/2$ and $\dim\ker\of{T|_{M^\iota_x}} \geq 2$, then we are in the situation of Proposition \ref{pro:dk2}, so we conclude again that $H^*(M;\Q)$ is 4--periodic.
\end{proof}

We are ready to prove the first part of Theorem \ref{LinearAssumptionAlpha}:

\begin{theo}[Theorem \ref{LinearAssumptionAlpha}, Part 1]\label{LinearAssumptionPart1}
Let $M^n$ be a closed, one-connected Riemannian manifold with positive sectional curvature, and assume $T$ is a torus acting effectively by isometries on $M$. If $\alpha\geq 3$ and $\dim(T)\geq s_\alpha(n)$, then $b_0(M^T) \leq a_\alpha n + 1$ where $a_\alpha = 3\cdot 2^{\alpha-4}/\alpha$.
\end{theo}

\begin{proof}
By Lemma \ref{LinearAssumptionBaseCase}, the theorem follows in the base case of $\alpha = 3$. We proceed by induction the prove the theorem for $\alpha\geq 4$.

By Lemma \ref{salphaEstimates} and the induction hypothesis, we may we may assume that $n > \alpha(\alpha-1)/3$. In particular, by Lemma \ref{salphaEstimates} again, we may assume that
	\[s_\alpha(n) \geq \log_2(a_\alpha n^2 + 2n + 2).\]

Next observe that if $M$ has $4$--periodic rational cohomology, then the even Betti numbers of $M$ are at most one. Hence we can conclude the theorem using Conner's theorem:
	\[b_0(M^T) \leq \sum b_{2i}(M^T) \leq \sum b_{2i}(M) \leq \frac{n}{2} + 1.\]
In particular, by Lemma \ref{LinearAssumptionLemma2}, we may assume that, for all nontrivial involutions $\iota\in T$ and for all $x\in M^\iota$, if $\cod M^\iota_x \leq n/2$, then $\cod M^\iota_x > n/\alpha$ and $\dim\ker\of{T|_{M^\iota_x}} \leq 1$.

We proceed by contradiction. Assume there exist $t' = \floor{a_\alpha n} + 2$ distinct components of $M^T$.
The estimate on $s_\alpha(n)$ established at the beginning of the proof implies that \[\dim(T) \geq s_\alpha(n) \geq \log_2(t'n + 2).\]
By Proposition \ref{pro:Griesmer} (with $c = 0$), there exists a nontrivial involution $\iota\in T$ and a component $N\subseteq M^\iota$ with $\cod(N)\leq\frac{n}{2}$ such that $N$ contains at least $\ceil{\frac{t'+1}{2}}$ of the $t'$ components of $M^T$. As established above, we may assume that $\cod(N) > n/\alpha$ and $\dim\ker\of{T|_{M^\iota_x}} \leq 1$.

Let $\overline{T} = T/\ker(T|_N)$, and observe that $\overline{T}$ acts effectively and isometrically on $N$. Moreover, the components of $N^{\overline{T}}$ are the components of $M^T$ that are contained in $N$. In particular, the number of components of $N^{\overline{T}}$ is at least $\ceil{\frac{t'+1}{2}}$. On the other hand, since
	\[\dim(\overline{T})\geq \dim(T) - 1 \geq s_\alpha(n) - 1 \geq s_{\alpha-1}\of{\dim N}\]
by Lemma \ref{salphaEstimates}, the induction hypothesis implies that the number of components of $N^{\overline{T}}$ is at most $a_{\alpha-1} \dim(N) + 1$. Putting these estimates together, we conclude that
	\[\frac{t'+1}{2} \leq a_{\alpha-1} \dim(N) + 1 \leq \pfrac{3 \cdot 2^{\alpha-5}}{\alpha-1}\pfrac{\alpha-1}{\alpha} n + 1,\]
and hence that $t' \leq a_\alpha n + 1$. This contradicts our definition of $t'$, so the proof is complete.
\end{proof}

With the proof of Theorem \ref{LinearAssumptionPart1} complete, it suffices to prove the following, which is the the second part of Theorem \ref{LinearAssumptionAlpha}.
\begin{theo}[Theorem \ref{LinearAssumptionAlpha}, Part 2]\label{LinearAssumptionPart2}
Let $M^n$ be a closed, one-connected Riemannian manifold with positive sectional curvature, and assume $T$ is a torus acting effectively by isometries on $M$. If $\alpha\geq 3$ and $\dim(T) \geq s_\alpha(n)$, then
	\[\chi(M)\leq\sum b_{2i}(M^T) \leq \of{\frac{n}{2} + 1}\of{1 + \log_{b_\alpha}\of{\frac{n}{2}+1}}\]
where $b_\alpha = 2^{\alpha-3}/(2^{\alpha-3} - 1)$.
\end{theo}

The first step of the proof is to find submanifolds that have $4$--periodic cohomology and that cover multiple components of $M^T$. The second step is to use these submanifolds together with Conner's theorem to conclude the upper bound on $\beven(M^T)$. To quantify the first step, we prove the following:

\begin{lemma}[Chains with periodic tails]\label{lem:LinearAssumptionPart2chains}
Let $M^n$ be a closed, one-connected Riemannian manifold with positive sectional curvature, and assume $T$ is a torus acting effectively and isometrically on $M$. If $\alpha \geq 3$ and $\dim(T)\geq s_\alpha(n)$, then there exist $0\leq j \leq \alpha - 3$ and a chain $M = M_0^{n_0} \supseteq \cdots \supseteq M_j^{n_j}$ of submanifolds satisfying
	\begin{enumerate}
	\item every $M_i$ contains $t_i$ components of $M^T$ for some $t_i \geq \frac{t-1}{2^i} + 1$,
	\item $T$ acts on every $M_i$ and $\dim\of{T/\ker(T|_{M_i})}\geq s_{\alpha-i}(n_i)$, and
	\item $M_j$ has $4$--periodic rational cohomology.
	\end{enumerate}
\end{lemma}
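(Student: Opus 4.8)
The plan is to induct on $\alpha$, halving the number of surviving components of $M^T$ and dropping the parameter by one at each step, until the ambient manifold is forced to be $4$--periodic. Write $t=b_0(M^T)$. The base case $\alpha=3$ is \lem{LinearAssumptionBaseCase}: $\dim(T)\geq s_3(n)$ makes $H^*(M;\Q)$ $4$--periodic, so we take $j=0$, $M_0=M$, $t_0=t$, and (1)--(3) are immediate. For $\alpha\geq 4$ I would assume the lemma for $\alpha-1$ and split on the size of $n$ relative to $\alpha(\alpha-1)/3$.

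\textbf{Small $n$.} If $n\leq\alpha(\alpha-1)/3$, then \lem{salphaEstimates}(1) gives $\dim(T)\geq s_\alpha(n)\geq s_{\alpha-1}(n)$, so the induction hypothesis applied to $(M,T,\alpha-1)$ yields a chain $M=M'_0\supseteq\cdots\supseteq M'_{j'}$ with $j'\leq\alpha-4$, condition (3), condition (1), and condition (2) in the form $\dim(T/\ker(T|_{M'_i}))\geq s_{(\alpha-1)-i}(\dim M'_i)$. I would then simply \emph{reindex}: set $M_0=M$ and $M_i=M'_{i-1}$ for $1\leq i\leq j'+1=:j$ (so $M_0=M_1=M$). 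Then (1) propagates since $t'_{i-1}\geq\tfrac{t-1}{2^{i-1}}+1\geq\tfrac{t-1}{2^{i}}+1$, and (2) holds at index $0$ by hypothesis and at index $i\geq1$ because $s_{(\alpha-1)-(i-1)}=s_{\alpha-i}$. The whole point of the shift is that it requires no monotonicity of $s_{\bullet}$ in its subscript.

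\textbf{Large $n$.} If $n>\alpha(\alpha-1)/3$ and $H^*(M;\Q)$ is already $4$--periodic, take $j=0$. Otherwise \lem{LinearAssumptionLemma2}, read contrapositively, forces every nontrivial involution $\iota\in T$ and $x\in M^\iota$ with $\cod(M^\iota_x)\leq n/2$ to satisfy $\cod(M^\iota_x)>n/\alpha$ and $\dk(M^\iota_x)\leq1$. Choosing one fixed point per component of $M^T$, I invoke \PROP{lem:Griesmer} with $c=0$: its hypothesis $\dim(T)\geq\floor{\log_2(tn+2)}$ holds because $t\leq a_\alpha n+1$ by \lem{lem:LinearAssumptionPart1} and then $s_\alpha(n)\geq\log_2(a_\alpha n^2+2n+2)\geq\log_2(tn+2)$ by \lem{salphaEstimates}(2). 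The proposition (via its proof) gives a nontrivial $\iota$ and a component $N\subseteq M^\iota$ with $\cod(N)\leq n/2$ containing at least $\ceil{\tfrac{t+1}{2}}$ of the chosen points, hence that many components of $M^T$; by the above, $n/\alpha<\cod(N)\leq n/2$ and $\dk(N)\leq1$. Put $M_1=N$ and $\overline{T}=T/\ker(T|_N)$, which acts effectively on $N$ with $\dim(\overline{T})\geq\dim(T)-1$; since $\cod(N)\geq n/\alpha$, \lem{salphaEstimates}(3) gives $\dim(\overline{T})\geq s_\alpha(n)-1\geq s_{\alpha-1}(\dim N)$. After checking that $N$ is one--connected (connectedness lemma, \thm{ConnectednessTheorem}), I apply the induction hypothesis to $(N,\overline{T},\alpha-1)$ and prepend $M_0=M$, exactly as before; the estimate $\ceil{\tfrac{t+1}{2}}\geq\tfrac{t-1}{2}+1$ is what carries (1) down the chain. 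In all cases $j=0$ at $\alpha=3$ and $j$ grows by at most one per unit increase of $\alpha$, so $j\leq\alpha-3$.

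\textbf{The main obstacle.} None of the ingredients is deep once \PROP{lem:Griesmer}, \lem{LinearAssumptionLemma2}, and \lem{LinearAssumptionBaseCase} are in hand; the real work is arranging the recursion so that (1)--(3) all survive each reduction simultaneously. The delicate point is condition (2): a Griesmer step drops $\dim(T)$ by only $1$, yet we must pass from $s_{\alpha-i}$ to $s_{\alpha-i-1}$. This works precisely because the component $N$ has codimension \emph{at least} $n/\alpha$, and that lower bound is available only because $M$ is not $4$--periodic (through \lem{LinearAssumptionLemma2}) --- it is exactly the hypothesis of \lem{salphaEstimates}(3). I would also keep track of the routine side conditions: $\cod(N)$ is even, since the isotropy representation factors through $\U(n/2)$, so $\dim N$ stays even for the next use of \PROP{lem:Griesmer}; $\dk(N)\leq1$ keeps the effective torus on $N$ large; and one--connectedness of $N$ is needed for the inductive hypothesis to apply.
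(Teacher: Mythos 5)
Your proposal is correct and follows essentially the same route as the paper: the same dichotomy (small $n$ via \lem{salphaEstimates}(1), otherwise either $4$--periodicity or a Griesmer step of codimension $>n/\alpha$ with $\dk\leq 1$ feeding \lem{salphaEstimates}(3)), with the only difference being that you run an explicit forward induction on $\alpha$ where the paper takes a maximal chain satisfying (1)--(2) and uses maximality to force $4$--periodicity of the tail. The one shared loose end — verifying one-connectedness of $N$ when $\cod(N)=n/2$ exactly, where the connectedness lemma gives only a $1$--connected inclusion — is glossed over by the paper as well, and you at least flag it.
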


\begin{proof}[Proof of Lemma \ref{lem:LinearAssumptionPart2chains}]
Clearly the chain $M = M_0$ of length 0 satisfies properties (1) and (2). Let $M=M_0^{n_0} \supseteq \cdots \supseteq M_j^{n_j}$ denote a maximal chain satisfying properties (1) and (2).

First note that the maximality of $j$ implies $n_j > (\alpha-j)(\alpha-j-1)/3$. Indeed if this were not the case, then we could define $M_{j+1} = M_j$ and conclude from Lemma \ref{salphaEstimates} that the chain $M_0 \supseteq\cdots\supseteq M_{j+1}$ satisfies properties (1) and (2).

Next note that, if $j\geq \alpha-3$, then the subchain $M=M_0\supseteq\cdots\supseteq M_{\alpha-3}$ has properties (1)--(3) since
	\[\dim(T/\ker(T|_{M_{\alpha-3}})) \geq s_{3}(\dim M_{\alpha-3}),\]
which implies that $M_{\alpha-3}$ has $4$--periodic rational cohomology by Lemma \ref{LinearAssumptionBaseCase}.

We may assume therefore that $j\leq \alpha-4$. We show in this case that $M_j$ has $4$--periodic rational cohomology. We will do this by choosing an involution $\iota\in T$ and a component $M_{j+1}$ of $M_j^\iota$, using the maximality of $j$ to conclude that $M_{j+1}$ does not satisfy properties (1) and (2), then using this information to conclude that $M_j$ has $4$--periodic rational cohomology.

Our first task is to choose the involution $\iota$. Set $\overline{T} = T/\ker(T|_{M_j})$. Note that $\overline{T}$ is a torus acting effectively on $M_j$ with dimension at least $s_{\alpha - j}(n_j)$ by property (2). By property (1) together with the first part of Theorem \ref{LinearAssumptionPart1}, we have
	\[t_j \leq b_0\of{(M_j)^{\overline{T}}} \leq a_{\alpha-j}n_j + 1.\]
In particular,
	\[\log_2(t_jn_j + 2) \leq \log_2(a_{\alpha-j}n_j^2 + n_j + 2) \leq s_{\alpha-j}(n_j),\]
where the last inequality holds by Lemma \ref{salphaEstimates} and our assumption that $n_j>(\alpha-j)(\alpha-j-1)/3$. This estimate allows us to apply Proposition \ref{pro:Griesmer} (with $t = t_j$ and $c=0$). Choose a nontrivial involution $\iota \in \overline{T}$ and a component $M_{j+1}$ of $M_j^\iota$ with $\cod(M_{j+1}\subseteq M_j)\leq \frac{n_j}{2}$ such that $M_{j+1}$ covers at least $\frac{t_j+1}{2}$ of the $t_j$ components of $M_j^{\overline{T}} = M_j^T$.

In particular, property (1) and our choice of $M_{j+1}$ imply that $M_{j+1}$ covers at least
	\[\frac{t_j+1}{2} \geq \frac{1}{2}\of{\of{\frac{t-1}{2^j} + 1} + 1} = \frac{t-1}{2^{j+1}} + 1\]
of the components of $M^T$, hence the chain $M = M_0 \supseteq\cdots\supseteq M_{j+1}$ satisfies property (1). By maximality of $j$, this chain must fail to satisfy property (2). It follows that
	\[\cod(M_{j+1} \subseteq M_j) < \frac{\dim M_j}{\alpha-j}\]
or
	\[\dim(T/\ker(T|_{M_{j+1}})) < \dim(T/\ker(T|_{M_j})) - 1.\]
If $\cod(M_{j+1}\subseteq M_j) < \frac{\dim M_j}{\alpha-j}$, then $M_j$ has $4$--periodic rational cohomology by \PROP{pro:new4per}, and if $\dim(T/\ker(T|_{M_{j+1}})) \leq \dim(T/\ker(T|_{M_j})) - 2$, then it follows as in the proof of Proposition \ref{pro:dk2} that $M_j$ is 4--periodic. This concludes the proof of the existence of a chain $M=M_0\supseteq \cdots \supseteq M_j$ satisfying properties (1), (2), and (3).
\end{proof}

With the first step complete, the second step of the proof of Theorem \ref{LinearAssumptionPart2} is to cover $M^T$ by chains $M=M_0 \supseteq\cdots\supseteq M_j$ of varying lengths $j \leq \alpha-3$ satisfying properties (1), (2), and (3). We will see that at most $1 + \log_{b_\alpha}\of{\frac{n}{2}+1}$ chains are required. Since the $M_j$ in each such chain has 4--periodic rational cohomology, it satisfies
	\[\beven(M_j) \leq \frac{\dim M_j}{2} + 1 \leq \frac{n}{2} + 1.\]
Combining these facts, Theorem \ref{LinearAssumptionPart2} -- and hence Theorem \ref{LinearAssumptionAlpha} -- follows. We proceed with the details.

\begin{proof}[Proof of Theorem \ref{LinearAssumptionPart2}]
First, by the claim, there exists a submanifold $N_1\subseteq M$ with $4$--periodic rational cohomology that covers at least $\frac{t-1}{2^{\alpha-3}} + 1$ of the $t$ components of $M^T$. This leaves $u_1$ components of $M^T$ uncovered by $N_1$, where
	\[u_1 \leq t - \left(\frac{t-1}{2^{\alpha-3}} + 1\right) = \of{1 - \frac{1}{2^{\alpha-3}}}(t-1) = (t-1)/b_{\alpha},\]
where $b_{\alpha} = \frac{2^{\alpha-3}}{2^{\alpha-3} - 1}$.

Next, we apply the claim to the remaining $u_1$ components of $M^T$. This yields a submanifold $N_2\subseteq M$ with $4$--periodic rational cohomology such that $N_1\cup N_2$ covers all but $u_2$ components of $M^T$, where
	\[u_2 \leq (u_1-1)/b_{\alpha} \leq (t-1-b_\alpha)/b_\alpha^2.\]
Continuing in this way, we claim that we obtain a cover of $M^T$ by rationally $4$--periodic submanifolds $N_1,\ldots,N_h\subseteq M$ where
	\[h \leq \log_{b_\alpha}\of{(b_\alpha-1)t+1}.\]
Indeed, after $h$ steps of this process, there are $u_h$ components of $M^T$ not yet covered, and
	\[u_h \leq (t - 1 - b_\alpha - b_\alpha^2 - \ldots - b_\alpha^{h-1})/b_\alpha^{h}.\]
If the right-hand side is less than 1, then we have covered $M^T$ by the submanifolds $N_1,\ldots,N_h$. Moreover, the right-hand side is less than one if and only if
		\[t < 1 + b_\alpha + \ldots + b_\alpha^{h} = \frac{b_\alpha^{h+1}-1}{b_\alpha - 1},\]
which in turn holds if and only if
	\[h \geq \floor{\log_{b_\alpha}((b_\alpha-1)t + 1)}.\]
Taking $h = \floor{\log_{b_\alpha}((b_\alpha-1)t + 1)}$, we see that we can cover $M^T$ by rationally $4$--periodic submanifolds $N_1,\ldots,N_h\subseteq M$.

Applying this fact together with Conner's theorem, we have
	\[\beven(M^T) = \sum_{F\subseteq M^T} \beven(F) \leq \sum_{i=1}^h \beven(N_i^T) \leq \sum_{i=1}^h \beven(N_i).\]
Recall that we have $\beven(N_i) \leq \frac{n}{2} + 1$ for all $i$, hence, by using the first part of the theorem, we can conclude the theorem by estimating $h$ as follows:
	\begin{eqnarray*}
	h	&\leq& 	\log_{b_\alpha}\of{(b_\alpha-1)(a_\alpha n + 1)+1}\\
		&=&		\log_{b_\alpha}\of{\pfrac{1}{2^{\alpha-3}-1}\pfrac{3\cdot 2^{\alpha - 4}}{\alpha} n + b_\alpha}\\
		&=&		1 + \log_{b_\alpha}\of{\frac{3n}{2\alpha} + 1}\\
		&<& 		1 + \log_{b_\alpha}\of{\frac{n}{2} + 1}.
	\end{eqnarray*}
\end{proof}

\bigskip
\section{On bounding the Euler characteristic from below}\label{sec:OddBettis}
\bigskip

In this section, we prove the following, which contains Theorem \ref{thm:OddBettis}:

\begin{theo}\label{thm:OddBettisPLUS}
Let $M^n$ be an even-dimensional, one-connected, closed Riemannian manifold with positive sectional curvature. If a torus $T$ acts effectively by isometries on $M$ with $\dim(T) \geq 2\log_2(n) + 2$, then $\sum b_{2i+1}(M^T) = 0$ and $\chi(M) \geq 2$ if any one of the following holds:
	\begin{enumerate}
	\item\label{thm:OddBettiAss2} $n\equiv 0 \bmod{4}$.
	\item\label{thm:OddBettiAss3} $b_2(M)$, $b_3(M)$, or $b_4(M)$ is zero.
	\item\label{thm:OddBettiAss4} the Bott--Grove--Halperin conjecture holds.
	\end{enumerate}
If, in addition, the torus acts equivariantly formally, then the odd Betti numbers of $M$ vanish and $\sum b_{2i}(M)=\chi(M)$.
\end{theo}

We remark that the assumption $\dim(T) \geq 2\log_2(n)-2$ suffices in the first case. Indeed, it follows from \cite[Theorem A]{Kennard1} that $\sum b_{2i+1}(M^T) = 0$, hence the argument below implies  that $\chi(M) \geq 2$. We proceed to the proof, which will require the rest of this section.

First, the result on equivariant formality follows directly from the properties above it. We note that, if we make the slightly stronger assumption that $\dim(T) \geq \log_{4/3}(n)$, then this theorem and Theorem \ref{LogAssumption} imply two-sided bounds on the Betti numbers of $M$.

Second, we use the following theorem (see \cite[Corollary IV.2.3, p.~178]{Bredon72}):
\begin{theo}\label{ExactlyOne}
If $X$ is a compact, closed, orientable manifold, then no torus action on $X$ can have exactly one fixed point.
\end{theo}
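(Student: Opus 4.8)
The plan is to reduce to the case of a circle action and then apply the Atiyah--Bott--Berline--Vergne localization formula to the constant equivariant class $1$; the contradiction will come from a degree count.

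Suppose for contradiction that a torus $T$ acts on a closed orientable manifold $X$ of positive dimension with $X^T=\{p\}$ a single point; dividing out the ineffective kernel, we may assume the action is effective. The first step is to replace $T$ by a circle. Only finitely many proper subtori of $T$ occur as identity components of isotropy groups of points of $X$, so one may choose a circle $S^1\subseteq T$ contained in none of them; since a connected subgroup of an isotropy group lies in that group's identity component, such a circle satisfies $X^{S^1}=X^T=\{p\}$. Thus we may assume $T=S^1$ and $X^{S^1}=\{p\}$.

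The second step is the local model at $p$. Because $\{p\}$ is an isolated fixed point, the slice theorem identifies an invariant neighborhood of $p$ with the linear isotropy representation on $T_pX$, and this representation has no nonzero fixed vector. Hence $T_pX\cong\bigoplus_{i=1}^m\cc_{w_i}$ with integer weights $w_i\neq 0$. In particular $\dim X=2m$ is even (so $m\geq 1$), and the normal bundle $N_p:=T_pX$ carries a canonical orientation with equivariant Euler class
\[e_{S^1}(N_p)=\paren{\prod_{i=1}^m w_i}\,u^m\in H^*_{S^1}(\mathrm{pt};\qq)=\qq[u],\]
which is a nonzero element of $\qq[u]$.

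The third and final step is localization. The equivariant pushforward $\int_X\co H^*_{S^1}(X;\qq)\to H^{*-2m}_{S^1}(\mathrm{pt};\qq)$ sends $1$ to $0$ for degree reasons, because $H^{-2m}_{S^1}(\mathrm{pt};\qq)=0$. On the other hand, the localization formula expresses $\int_X 1$ as a sum over the components of $X^{S^1}$ of their fixed-point contributions; here there is exactly one term, namely $\int_{\{p\}}1/e_{S^1}(N_p)=1/\paren{(\prod_i w_i)\,u^m}$, a nonzero element of the localized ring $\qq(u)$. Comparing the two computations gives $0=1/\paren{(\prod_i w_i)\,u^m}$, which is absurd. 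The only substantial ingredient is the localization theorem itself; the circle reduction and the Euler-class computation are routine, and I would expect the one point needing a little care to be the verification that a sufficiently generic circle in $T$ really has the same fixed-point set as $T$. (Alternatively, one may simply cite \cite{Bredon72}, whose proof instead runs through the Borel spectral sequence in equivariant cohomology.)
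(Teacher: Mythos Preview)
Your argument is correct. Note, however, that the paper does not supply its own proof of this statement: it is quoted directly from \cite[Corollary IV.2.3, p.~178]{Bredon72}, and you yourself flag this at the end of your write-up. So the comparison is really between your localization argument and Bredon's.

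Bredon's proof works in Borel equivariant cohomology via the spectral sequence of the fibration $X\hookrightarrow X_T\to BT$ and the localization theorem in the Smith-theoretic sense (restriction to $X^T$ is an isomorphism after inverting the polynomial generators), from which one deduces that $H^*_T(X^T)$ and $H^*_T(X)$ have the same rank as $H^*_T(\mathrm{pt})$--modules; a single fixed point would force $H^*(X;\qq)\cong H^*(\mathrm{pt};\qq)$, impossible in positive dimension. Your route via the Atiyah--Bott--Berline--Vergne integration formula is more direct and arguably more transparent: the contradiction drops out of a one-line degree count once the equivariant Euler class at the isolated fixed point is identified. The trade-off is that your argument, as written, uses the slice theorem and the smooth structure (to get the linear isotropy model and the weights $w_i$), whereas Bredon's version goes through for continuous torus actions on topological manifolds. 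In the present paper this costs nothing, since all actions in sight are isometric and hence smooth.

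Two minor points worth tightening. First, you silently assume $\dim X>0$; the statement as phrased is false for a point, so either say this or note that the application in the paper is to $M^n$ with $n\geq 2$. Second, the orientation on $T_pX$ induced by the complex weight-space decomposition may differ from the ambient orientation by a sign, so the localization contribution is $\pm 1/\bigl((\prod_i w_i)\,u^m\bigr)$; of course this is still nonzero and the contradiction survives, but it is worth a word.
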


Assume for a moment that each of Assumptions (\ref{thm:OddBettiAss2})--(\ref{thm:OddBettiAss4}) imply that $\sum b_{2i+1}(M^T) = 0$. Since $\chi(M) = \chi(M^T)$ by Conner's theorem, and since $M^T$ is nonempty by Berger's theorem, it follows that $\chi(M)>0$. Moreover, since $M$ is orientable, Theorem \ref{ExactlyOne} implies $M^T$ is not a single point. Hence there are at least two components or one of positive, even dimension. Since the components of $M^T$ are orientable, we have in either case that $\chi(M)\geq 2$.

This leaves us with the task of proving $\sum b_{2i+1}(M^T) = 0$ under each of Assumptions (\ref{thm:OddBettiAss2})--(\ref{thm:OddBettiAss4}). Fix any component $F$ of $M^T$. Taking $c = 6$ in Theorem \ref{thmP}, we conclude the existence of a submanifold $N\subseteq M$ such that
	\begin{enumerate}[(a)]
	\item\label{prop1} $N$ is rationally $4$--periodic,
	\item\label{prop2} $N\subseteq M$ is $6$--connected,
	\item\label{prop3} $\dim(N) \equiv \dim(M) \bmod{4}$,
	\item\label{prop4} $\dim(N) \geq 10$,
	\item\label{prop5} $N$ is totally geodesic in $M$, and
	\item\label{prop6} $T$ acts on $N$ and $F$ is a component of $N^T$.
	\end{enumerate}
The last two points follow from the choice of $N$ as a component of $M^H$ for some $H\subseteq T$. Indeed, such an $N$ is totally geodesic, and $T$ acts on $N$ since $T$ is abelian and connected.

By (\ref{prop6}) and Conner's theorem,
	\[\sum b_{2i+1}(F) \leq \sum b_{2i+1}(N^T) \leq \sum b_{2i+1}(N),\]
hence it suffices to show that $N$ has vanishing odd Betti numbers. Moreover, by (\ref{prop1}) and (\ref{prop2}), $N$ is simply connected and has 4--periodic Betti numbers, so it suffices to prove that $b_3(N) = 0$. We prove this in three cases, which together cover Assumptions (\ref{thm:OddBettiAss2})--(\ref{thm:OddBettiAss4}).

First, assume $n\equiv 0\bmod{4}$. By (\ref{prop3}), $\dim(N)$ is also divisible by four, hence four--periodicity and Poincar\'e duality imply
		\[b_3(N) = b_7(N) = \cdots = b_{\dim(N) - 1}(N) = b_1(N) = 0.\]

Second, assume $n\equiv 2\bmod{4}$, and assume $b_2(M)$, $b_3(M)$, or $b_4(M)$ is zero. By (\ref{prop2}), this is equivalent to $b_2(N)$, $b_3(N)$, or $b_4(N)$ vanishing. We prove that
		\[b_2(N) = 0~\implies~b_4(N) = 0~\implies~b_3(N)=0.\]
	The first implication holds because four--periodicity, Poincar\'e duality, and the assumption that $\dim(N)\equiv n\equiv 2\bmod{4}$ imply $b_2(N) = b_4(N)$. The second implication follows from the definition of four--periodicity. Indeed, the element $x\in H^4(N;\Q)$ inducing periodicity is zero, hence (\ref{prop4}) and four--periodicity implies that the map $H^3(N;\Q) \to H^7(N;\Q)$ induced by multiplication by $x=0$ is an isomorphism. This can only be if $b_3(N) = 0$.

Finally, assume $n\equiv 2\bmod{4}$, and assume that the Bott--Grove--Halperin conjecture holds. Note that, by the previous case, we may assume that $b_2(N) = b_4(N) = 1$. By (\ref{prop5}), $N$ has positive curvature, so the Bott--Grove--Halperin conjecture implies that $N$ is rationally elliptic. In particular, $\chi(N) \geq 0$. Using four--periodicity and the condition that $\dim(N)\equiv n\equiv 2\bmod{4}$, we can calculate $\chi(N)$ in terms of the $b_3(N)$:
	\[0 \leq \chi(N) = 2 + \pfrac{\dim(N) - 2}{4}\of{2 - b_3(N)}.\]
By (\ref{prop4}), this inequality implies $b_3(N) \leq 3$. But $b_3(N)$ must be even by Poincar\'e duality and the graded commutativity of the cup product, hence $b_3(N) \leq 2$. Returning to the expression above for $\chi(N)$, we conclude that $\chi(N) > 0$, and returning to the assumption of rational ellipticity, we conclude that $b_3(N) = 0$, as required.

\bigskip
\section{Proofs of Corollaries C and D}\label{Corollaries}
\bigskip

In this section we prove Corollary \ref{cor:StableHopf} and comment on some consequences of it. We then state and prove a more elaborate version of Corollary \ref{cor:SymmetricSpaces}.

\begin{proof}[Proof of Corollary \ref{cor:StableHopf}]
Recall that we have a closed, one-connected manifold $M^n$ with positive sectional curvature and symmetry rank $\log_{4/3} n$. We assume that $M$ is a $k$--fold Cartesian product or a $k$--fold connected sum of a manifold $N$. We claim that $k < 3(\log_2 n)^2$ or $k < 2^{3(\log_2 n)^2}$, respectively. We prove the two cases simultaneously.

Observe that $\log_{4/3} n \geq 2\log_2 n$, hence $b_2(M) \leq 1$ by Theorem \ref{thmP}. Since $b_2(M) = kb_2(N)$ in both cases, and since we may assume without loss of generality that $k\geq 2$, we conclude that $b_2(M) = 0$. This implies $\chi(M) \geq 2$ by Theorem \ref{thm:OddBettis}.

Theorem \ref{LogAssumption} implies that $\chi(M)\leq 2^{3(\log_2 n)^2}$. Using the multiplicativity and additivity formulas for the Euler characteristic, we obtain either
	\[2 \leq \chi(N)^k < 2^{3(\log_2 n)^2}\]
or
	\[2 \leq 2 + k(\chi(N) - 2) < 2^{3(\log_2 n)^2}\]
in the respective cases.

These inequalities imply either
	\[1 < \chi(N) < 2^{\frac{3(\log_2 n)^2}{k}}\]
or
	\[2 \leq \chi(N) < 2 + \frac{2^{3(\log_2 n)^2}}{k}.\]
The first is a contradiction for $k \geq 3(\log_2 n)^2$, while the second implies $\chi(N) = 2$ for $k \geq 2^{3(\log_2 n)^2}$, another contradiction.
\end{proof}

\pagebreak[2]

\begin{rem}~
\begin{itemize}
\item Note that if there exists a non-negatively curved metric on $M$, also the product metric is non-negatively curved. No similar result on the existence of non-negatively curved metrics on connected sums seems to be known unless it is the sum of two rank one symmetric spaces (see \cite{Cheeger73}). Thus the result on the Cartesian power, i.e.~the first assertion in the stable Hopf conjecture with symmetry (see Corollary \ref{cor:StableHopf}), appears to be stronger than the second one.

\item As for the assertion on the iterated fibrations, the proof is similar. Since the $F_i=G_i/H_i$ are compact homogeneous spaces of equal rank Lie groups, they have positive Euler characteristic and their Betti numbers are concentrated in even degrees. Hence $\chi(F_i)\geq 2$ for all $i$ by Poincar\'e duality, and the arguments in the proof of Corollary \ref{cor:StableHopf} apply equally.
\end{itemize}
\end{rem}
\begin{ex}
Let us make the result provided by Corollary \ref{cor:StableHopf} more precise in the classical case of a product with factors $\s^2$.
For example, the estimates in Theorem \ref{LogAssumption}
imply that there is no metric of positive curvature on any of $(\s^2)^{\times 124}$, $(\s^2)^{\times 125}$, \dots, $(\s^2)^{\times 314}$ compatible with the isometric action of a $20$--torus.
\end{ex}

We now proceed to Corollary \ref{cor:SymmetricSpaces}.

\begin{proof}[Proof of Corollary \ref{cor:SymmetricSpaces}]
We have $\log_{4/3}(2n)+7\geq 2 \log_{2}(2n)+7$ for $n\geq 1$. By Theorem 3.3 in \cite{Kennard2}, there exists a product of spheres $S = \s^{n_1}\times\cdots\times\s^{n_t}$ with $n_i\geq 16$ such that
	\begin{enumerate}
	\item $N = S$,
	\item $N = S\times R$ where $R\in\{\cc\pp^m, \SO(m+2)/\SO(m)\times\SO(2)\}$,
	\item $N = S\times R$ where $R\in\{\hh\pp^m, \SO(m+3)/\SO(m)\times\SO(3)\}$,
	\item $N = S\times R\times \s^2$ where $R\in\{\hh\pp^m, \SO(m+3)/\SO(m)\times\SO(3)\}$, or
	\item $N = S \times R\times \s^3$ where $R\in\{\hh\pp^m, \SO(m+3)/\SO(m)\times\SO(3)\}$.
	\end{enumerate}
Note that the number of spherical factors $s$ (as in the statement of the corollary) is $t$ in the first three cases and is $t+1$ in the last two cases.

It follows that $b_2(M) = 0$ or $b_3(M) = 0$. In particular, $\chi(M) > 0$ by Theorem \ref{thm:OddBettis}. This implies that each $n_i$ is even, that Case (5) cannot occur, and that $R = \SO(m+3)/\SO(m)\times\SO(3)$ can only occur if $m$ is even.

In Cases (1), (2), and (3), we estimate the Euler characteristic by
	\[\chi(N) \geq \chi(S) = 2^t = 2^s,\]
and, in Case (4), we estimate
	\[\chi(N) \geq \chi(S)\chi(\s^2) = 2^{t+1} = 2^s.\]
Since $\chi(N) = \chi(M) < 2^{3(\log_2 n)^2}$, we conclude that the number of spherical factors $s$ is less than $3(\log_2 n)^2$.

For this we make use of
\begin{align*}
\chi(\SO(2+m)/\SO(2)\times\SO(m))&=m+2=n+2 \qquad \textrm{for even $m$}\\
\chi(\SO(2+m)/\SO(2)\times\SO(m))&=m+1=n+1 \qquad \textrm{for odd $m$}\\
\chi(\SO(3+m)/\SO(3)\times\SO(m))&=m+2=2n/3+2  \qquad \textrm{for even $m$}
\end{align*}
and $\dim \SO(p+m)/\SO(p)\times\SO(m)=pm$ where $2n$ denotes the respective dimensions of the manifolds. (We recall that the Euler characteristics of the homogeneous spaces can easily be computed using the classical formula $\chi(G/H)=|\W(G)|/|\W(H)|$, i.e.~as the quotient of the cardinalities of the respective Weyl groups.)
\end{proof}

We leave it to the interested reader to vary this result using the factors $\kappa_i$ from Section \ref{ProofOfLogAssumption}. Besides, a variation of this result using Theorem \ref{LinearAssumptionDelta} is obvious. Also, the presented result is certainly only interesting for large $n$, since in small dimensions dimension estimates will be better.


\pagebreak

\
\vfill

\begin{center}
\noindent
\begin{minipage}{\linewidth}
\small \noindent \textsc
{Manuel Amann} \\
\textsc{Fakult\"at f\"ur Mathematik}\\
\textsc{Institut f\"ur Algebra und Geometrie}\\
\textsc{Karlsruher Institut f\"ur Technologie}\\
\textsc{Kaiserstra\ss e 89--93}\\
\textsc{76133 Karlsruhe}\\
\textsc{Germany}\\
[1ex]
\textsf{manuel.amann@kit.edu}\\
\textsf{http://topology.math.kit.edu/$21\_54$.php}
\end{minipage}
\end{center}

\vspace{10mm}

\begin{center}
\noindent
\begin{minipage}{\linewidth}
\small \noindent \textsc
{Lee Kennard} \\
\textsc{Department of Mathematics}\\
\textsc{University of California}\\
\textsc{Santa Barbara, CA 93106-3080}\\
\textsc{USA}\\
[1ex]
\textsf{kennard@math.ucsb.edu}\\
\textsf{http://www.math.ucsb.edu/$\sim$kennard/}
\end{minipage}
\end{center}

\end{document}